\theoremstyle{plain}
\newtheorem{theorem}{Theorem}
\newtheorem{lemma}[theorem]{Lemma}
\newtheorem{corollary}[theorem]{Corollary}
\theoremstyle{definition}
\newtheorem{definition}[theorem]{Definition}
\newtheorem{remark}[theorem]{Remark}
\DeclareMathOperator{\Res}{Res}
\newcommand{\CC}{\widehat{\mathbb{C}}}
\title[Roots of the independence polynomial]{On the location of roots of the independence polynomial of bounded degree graphs}
\date{\today}
\author{Pjotr Buys}
\thanks{Funded by the Netherlands Organisation of Scientific Research (NWO): 613.001.851}
\email{pjotr.buys@gmail.com}
\begin{document}

\begin{abstract}
	In \cite{PetersRegts2017} Peters and Regts confirmed a conjecture by Sokal \cite{Sokal2001} by 
	showing that for every $\Delta \in \mathbb{Z}_{\geq 3}$ there exists a complex neighborhood of 
	the interval $\left[0, \frac{\left(\Delta - 1\right)^{\Delta - 1}}{\left(\Delta-2\right)^\Delta}\right)$ 
	on which the independence polynomial is nonzero for all graphs of maximum degree $\Delta$. Furthermore, 
	they gave an explicit neighborhood $U_\Delta$ containing this interval on which the independence 
	polynomial is nonzero for all finite rooted Cayley trees with branching number $\Delta$. The 
	question remained whether $U_\Delta$ would be zero-free for the independence polynomial of all 
	graphs of maximum degree $\Delta$. In this paper it is shown that this is not the case.
\end{abstract}

\maketitle

\begin{section}{Introduction}
	Let $G = (V,E)$ denote a simple graph. A subset of vertices $I \subseteq V$ is called 
	\emph{independent} if no two vertices $v_1, v_2 \in I$ are connected by an edge in $G$. We 
	define \emph{the independence polynomial} $Z_G$ as 
	\begin{equation}
  		\label{eq: Indep Pol}
  		Z_G(\lambda) = \hspace{-10 pt} \sum_{\substack{I \subseteq V \\ \text{independent}}} 
				\hspace{-5 pt} \lambda^{|I|}.
	\end{equation}
	In statistical physics the independence polynomial occurs as the partition function of the 
	hard-core model.

	For any $\Delta \in \mathbb{Z}_{\geq 3}$ we let $\mathcal{G}_\Delta$ be the set of graphs
	of maximum degree at most $\Delta$. It is interesting to study the location of the complex roots of $Z_G$ for 
	$G \in \mathcal{G}_\Delta$ from both a statistical physics perspective (see e.g. \cite{LeeYangPhase1},
	\cite{LeeYangPhase2} and \cite{Sokal2001}) 
	and a combinatorial perspective (see e.g. \cite{BarvinokBook2016}). Useful results in this area of research pertain to 
	finding regions in the complex plane for which $Z_G$ does not vanish for all $G \in \mathcal{G}_\Delta$.
	Patel and Regts \cite{PatelRegts2017} showed that such
	a zero-free domain for a partition function gives rise to a polynomial time 
	algorithm for approximating the function in that region. Their work is based on the interpolation
	method developed by Barvinok (see e.g. his book \cite{BarvinokBook2016}). Many
	results on zero-free regions regarding both the univariate 
	independence polynomial as stated in \eqref{eq: Indep Pol} and its multivariate generalization can 
	be found in \cite{ScottSokal}, \cite{BarvinokBook2016}, \cite{PetersRegts2017} and \cite{BencsCsikvari2018}.

	We will now state two results from \cite{PetersRegts2017} on the topic of zero-free regions 
	that are relevant to this paper. 

	\begin{theorem}[Theorem 1.1 in \cite{PetersRegts2017}]
		Let $\Delta \in \mathbb{Z}_{\geq 3}$ and let $\lambda_{\Delta} = 
		\frac{\left(\Delta - 1\right)^{\Delta - 1}}{\left(\Delta-2\right)^\Delta}$. There exists a 
		complex domain $D_\Delta$ containing the real interval $[0,\lambda_\Delta)$ such that 
		$Z_G(\lambda) \neq 0$ for all $G \in \mathcal{G}_\Delta$ and $\lambda \in D_\Delta$.
	\end{theorem}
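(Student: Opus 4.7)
The plan is to execute the standard two-step reduction for zero-freeness of the independence polynomial: first reduce from arbitrary graphs in $\mathcal{G}_\Delta$ to finite rooted trees via Weitz's self-avoiding walk construction, and then prove nonvanishing on trees by establishing a complex invariant region for the tree recursion on occupation ratios.

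For a rooted tree $(T, v)$ with children $v_1, \dots, v_d$ generating subtrees $T_1, \dots, T_d$, introduce the occupation ratio $R_{T,v}(\lambda) = \lambda Z_{T - N[v]}(\lambda)/Z_{T-v}(\lambda)$, which satisfies the recursion
\begin{equation*}
    R_{T,v}(\lambda) = F_\lambda(R_{T_1,v_1}, \dots, R_{T_d,v_d}), \qquad F_\lambda(z_1,\dots,z_d) := \lambda \prod_{i=1}^{d} \frac{1}{1 + z_i}.
\end{equation*}
Since $Z_T(\lambda) = Z_{T-v}(\lambda)\,(1 + R_{T,v}(\lambda))$, nonvanishing of $Z_T$ follows inductively from keeping each subtree-ratio away from $-1$. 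Weitz's self-avoiding walk tree $T_{\mathrm{SAW}}(G, v)$ is a rooted tree of maximum degree at most $\Delta$ whose root-ratio agrees with $R_{G,v}(\lambda)$ where both are defined, and a standard continuity argument along a path from $\lambda = 0$ shows that if the subtree-ratios of $T_{\mathrm{SAW}}(G,v)$ stay uniformly bounded away from $-1$ at $\lambda$, then $Z_G(\lambda) \neq 0$.

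The analytic core is therefore to exhibit an open region $V_\Delta \subset \mathbb{C} \setminus \{-1\}$ and a complex neighborhood $D_\Delta$ of $[0, \lambda_\Delta)$ such that for every $\lambda \in D_\Delta$ and every $d \leq \Delta - 1$ the map $F_\lambda$ carries $V_\Delta^d$ into $V_\Delta$. A natural ansatz is to center $V_\Delta$ at the attracting real fixed point $z^*(\lambda)$ of the symmetric equation $z = \lambda/(1+z)^{\Delta-1}$; for $\lambda \in [0, \lambda_\Delta)$ this fixed point is attracting, and the implicit function theorem extends the branch into a complex neighborhood of the interval. I would then attempt to choose $V_\Delta$ as a disk — possibly after a Möbius or logarithmic conjugation that pulls the fixed point to a canonical location — and verify invariance by a multivariate contraction estimate on $F_\lambda$ along with a check that $-1 \notin V_\Delta$.

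The main obstacle is the endpoint $\lambda = \lambda_\Delta$: at this value the univariate recursion $z \mapsto \lambda/(1+z)^{\Delta-1}$ has derivative $-1$ at its fixed point $z^* = 1/(\Delta-2)$, so contraction is only marginal and no round disk around $z^*(\lambda_\Delta)$ can be invariant in either the Euclidean or the hyperbolic metric of $\mathbb{C} \setminus \{-1\}$. I expect the resolution to demand a non-circular choice of $V_\Delta$ dictated by the local dynamics at this neutral fixed point — for example, a cardioid- or petal-shaped region obtained from the normal form of the degree-two iterate $F_\lambda \circ F_\lambda$ (which has multiplier $+1$ at $\lambda_\Delta$), or equivalently working in a logarithmic coordinate $w = \log(1+z)$ where the multiplicative recursion linearizes into a sum. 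Once invariance is verified for $\lambda$ in a real neighborhood of $[0,\lambda_\Delta)$ with appropriate openness and uniformity, a compactness and open-mapping argument produces the desired complex domain $D_\Delta$ containing the full interval.
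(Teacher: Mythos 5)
The paper you are reviewing does not prove this statement: it is quoted as Theorem 1.1 of \cite{PetersRegts2017} and used only as background, so the comparison has to be with the proof in that reference. Your outline --- reduce from $\mathcal{G}_\Delta$ to finite rooted trees via Weitz's self-avoiding-walk construction, then control the ratio recursion $R \mapsto \lambda\prod_i(1+R_i)^{-1}$ by exhibiting a region avoiding $-1$ that is forward-invariant for all parameters in a complex neighborhood of the interval --- is exactly the strategy of Peters and Regts, so the architecture is right. Note, though, that the analytic core is left as an ansatz: the real difficulty is constructing a single region $V$ with $f_{\lambda,d}(V)\subseteq V$ for \emph{every} $d\in\{0,\dots,\Delta-1\}$ simultaneously (the SAW tree has all down-degrees up to $\Delta-1$, and its leaves force $\lambda\in V$); these maps have different fixed points, so centering a disk at the fixed point of the symmetric $d=\Delta-1$ recursion does not by itself deliver invariance for the other degrees, and this is where all the work in \cite{PetersRegts2017} actually lies.

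The concrete misstep is your ``main obstacle.'' The theorem asks for a domain containing the \emph{half-open} interval $[0,\lambda_\Delta)$, not its closure. For each $\lambda^*\in[0,\lambda_\Delta)$ the fixed point of $f_{\lambda^*,\Delta-1}$ is strictly attracting (the multiplier lies in $(-1,0]$), so one gets an invariant region and hence a zero-free neighborhood $U_{\lambda^*}$ whose size is allowed to degenerate as $\lambda^*\to\lambda_\Delta$; the domain $D_\Delta=\bigcup_{\lambda^*}U_{\lambda^*}$ then contains the whole interval. The parabolic analysis at $\lambda_\Delta$ (petals, normal form of the second iterate) is therefore not needed, and pursuing it would aim at a strictly stronger statement that is false: $\lambda_\Delta$ is the point of $\partial U_\Delta$ corresponding to $\alpha=-1$ in \eqref{eq: Ud}, so by part (2) of Theorem \ref{thm: zerofree Cayley} every neighborhood of $\lambda_\Delta$ already contains zeros of $Z_{T_{\Delta,n}}$ for Cayley trees, and no zero-free neighborhood of the closed interval exists. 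You should replace the petal construction with the elementary ``uniform strict attraction on compact subintervals, then take a union over $\lambda^*$'' argument, and then invest the effort in the genuinely delicate multi-degree invariance estimate.
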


	This result had previously been conjectured by Sokal \cite{Sokal2001}. We will henceforth denote 
	by $D_\Delta$ the maximal domain with the properties listed above.

	The other relevant result of \cite{PetersRegts2017} regards a zero-free region for the 
	independence polynomial of a certain subset of $\mathcal{G}_\Delta$, namely that of finite rooted Cayley trees. 
	These finite rooted Cayley trees have the following recursive definition. For each $\Delta$ we let 
	the $0$-th-level rooted tree with branching number $\Delta$ be the graph consisting of a single vertex 
	called the \emph{root}. We denote this tree by $T_{\Delta,0}$. For $n \geq 1$ we let $T_{\Delta,n}$ 
	denote the $n$-th-level rooted Cayley tree with branching number $\Delta$ and we define it by a single root 
	vertex attached to $\Delta - 1$ disjoint copies of $T_{\Delta,n-1}$ by their respective root vertices. 
	Note that for all $\Delta, n$ we have that $T_{\Delta,n} \in \mathcal{G}_\Delta$.

	\begin{theorem}[Proposition 2.1 in \cite{PetersRegts2017}]
  		\label{thm: zerofree Cayley}
  		Let $\Delta \in \mathbb{Z}_{\geq 3}$ and define
  		\begin{equation}
  			\label{eq: Ud}
	 		U_\Delta = \left\{\frac{-\alpha \cdot {(\Delta-1)}^{\Delta - 1}}
	 				{\left(\Delta - 1+ \alpha\right)^{\Delta}} : \left|\alpha\right| < 1 \right\}.
  		\end{equation}
  		Then
  		\begin{enumerate}
	 		\item 
	 		for all $n \in \mathbb{Z}_{\geq 0}$ and all $\lambda \in U_\Delta$ it is the case that
	 		$Z_{T_{\Delta,n}(\lambda)} \neq 0$;
	 		\item
	 		for any $\lambda \in \partial U_\Delta$ and neighborhood $U of \lambda$ there exists some 
	 		$n \in \mathbb{Z}_{\geq 0}$ and $\lambda' \in U$ such that $Z_{T_{\Delta,n}}(\lambda') = 0$.
		\end{enumerate}
	\end{theorem}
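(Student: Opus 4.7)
The plan is to translate both parts of the theorem into statements about iterates of a single rational map $f_\lambda(z) = \lambda/(1+z)^{\Delta-1}$ and then apply classical complex dynamics. The reduction is standard. Writing $Z_{T_{\Delta,n}} = P_n + Q_n$ for the partial sums over independent sets not containing resp.\ containing the root, the branching structure gives $P_n = Z_{T_{\Delta,n-1}}^{\Delta-1}$ and $Q_n = \lambda P_{n-1}^{\Delta-1}$. Hence the ratio $R_n := Q_n/P_n$ satisfies the tree recursion $R_n = f_\lambda(R_{n-1})$ with $R_0 = \lambda$. Since $Z_{T_{\Delta,n}} = P_n(1+R_n)$, a short induction gives $Z_{T_{\Delta,n}}(\lambda) \neq 0$ if and only if $R_k(\lambda) \neq -1$ for every $0 \leq k \leq n$. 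Both conclusions are therefore statements about the forward orbit $\{R_n(\lambda)\}_{n \geq 0}$, which up to a two-step shift coincides with the forward orbit under $f_\lambda$ of its critical point $\infty$.

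For part (1), I would identify $U_\Delta$ with the locus of attracting fixed points: substituting $z^*(\alpha) := -\alpha/(\Delta-1+\alpha)$ into the fixed-point equation $z(1+z)^{\Delta-1} = \lambda$ gives $\lambda = \phi(\alpha) := -\alpha(\Delta-1)^{\Delta-1}/(\Delta-1+\alpha)^\Delta$, exactly the expression in \eqref{eq: Ud}, and a computation yields $f_\lambda'(z^*(\alpha)) = \alpha$. So $\lambda \in U_\Delta$ precisely when $f_\lambda$ has a fixed point of multiplier in the open unit disk. The map $f_\lambda$ has only two critical points on $\CC$, namely $-1$ and $\infty$, whose forward orbits merge into the single sequence $-1 \to \infty \to 0 \to \lambda \to R_1 \to R_2 \to \cdots$. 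By the classical Fatou theorem, each attracting cycle of a rational map contains a critical orbit; here this forces $R_n \to z^*$. If some $R_n$ were $-1$, the tail would cycle through $\infty, 0, \lambda, R_1, \ldots$, making the orbit periodic with period at least $3$ and precluding convergence to $z^*$, a contradiction except in the finitely many degenerate cases where $z^* \in \{0, \lambda\}$ (i.e.\ $\lambda = 0$ or $\lambda$ a root of $(1+\lambda)^{\Delta-1} = 1$), which are easily handled by hand.

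Part (2) is the main difficulty, and I would attack it by contradiction via Montel's theorem. Suppose $\lambda_0 \in \partial U_\Delta$ had a neighborhood $U$ on which $R_n(\lambda) \neq -1$ for every $n$. After shrinking $U$ to exclude $\lambda = 0$, the recursion also forces $R_n \neq 0, \infty$ on $U$, so $\{R_n\}$ is a family of meromorphic functions omitting three values and is therefore normal on $U$ by Montel. Any subsequential limit $g$ is holomorphic, satisfies $g(\lambda) = f_\lambda(g(\lambda))$, and coincides with $z^*(\lambda)$ on the nonempty open set $U \cap U_\Delta$; by the identity theorem $g$ is the holomorphic continuation of the attracting branch of fixed points across $\lambda_0$. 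On $U \setminus \overline{U_\Delta}$ this continuation $g(\lambda)$ is a repelling fixed point — its multiplier $f_\lambda'(g(\lambda))$ continues $\phi^{-1}$ outside the closed unit disk — while an orbit of the form $R_n(\lambda) = f_\lambda^{n+2}(\infty)$ cannot converge to a repelling fixed point on an open set of parameters. The technical crux will be making this last step rigorous uniformly along $\partial U_\Delta$, and in particular at the cusp point $\phi(1) = -(\Delta-1)^{\Delta-1}/\Delta^\Delta$ where $\phi'(1) = 0$ and the continuation of the attracting branch itself develops a branch point.
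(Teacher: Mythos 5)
First, a point of comparison: the paper itself does not prove this statement --- it is quoted as Proposition 2.1 of \cite{PetersRegts2017} --- so there is no in-paper proof to measure you against directly. I will therefore judge your argument on its own terms and against the closely related machinery the paper does develop in Sections 3--4.

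Your reduction to the orbit $R_n=f_\lambda^n(\lambda)$ with $f_\lambda=f_{\lambda,\Delta-1}$, and your proof of part (1), are correct. The computation identifying $U_\Delta$ with the locus where $f_\lambda$ has an attracting fixed point is right; Fatou's theorem applied to the only two critical points $-1,\infty$ (whose orbits merge) forces $R_n\to z^*$ in $\CC$; and an orbit through $-1$ is periodic and visits the three distinct points $-1,\infty,0$ infinitely often, so it cannot converge --- your hedge about ``degenerate cases'' is unnecessary, since a periodic sequence taking more than one value never converges. (One small precision: the clean equivalence is that $Z_{T_{\Delta,n}}(\lambda)\ne 0$ \emph{for all} $n$ iff $R_k\ne -1$ for all $k$; a single $R_k=-1$ kills $Z_{T_{\Delta,k}}$ but not necessarily the later polynomials. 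This is all either part needs.)

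Part (2) contains the genuine gap, which you flag yourself. Two issues. First, do not route the argument through $U\setminus\overline{U_\Delta}$: it is not immediate that this set is nonempty near $\lambda_0$, nor that the continued fixed point is repelling precisely there, and a subsequential limit of a normal family does not automatically satisfy $g=f_\lambda(g)$. Instead note that all subsequential limits agree with $z^*$ on the nonempty open set $U\cap U_\Delta$, hence agree on $U$, so $R_n\to g$ locally uniformly and $g=f_\lambda(g)$; then the multiplier $\mu(\lambda)=f_\lambda'(g(\lambda))$ is holomorphic on $U$, satisfies $|\mu|<1$ on $U\cap U_\Delta$ and $|\mu(\lambda_0)|=1$ (else $\lambda_0\in U_\Delta$), so it is nonconstant and by the open mapping theorem $|\mu|>1$ on some nonempty open $V\subseteq U$. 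Second, the step you left open --- that $R_n$ cannot converge to a repelling fixed point on an open set of parameters --- genuinely needs an argument, but it closes: an orbit converging to a repelling fixed point must eventually equal it; covering $V$ by the closed sets $\{R_N=g\}$ and applying Baire plus the identity theorem forces $R_N\equiv g$, hence $R_N\equiv R_{N+1}$ as rational functions of $\lambda$; but for small real $\lambda>0$ the map $f_\lambda$ is strictly decreasing and injective on $(0,\infty)$ with $z^*<\lambda=R_0$, so $R_n\ne z^*$ and $R_N\ne R_{N+1}$ there, a contradiction. Alternatively, and closer to the paper's own toolkit: every $\lambda_0\in\partial U_\Delta$ carries an indifferent fixed point of $f_{\lambda_0,\Delta-1}$ (a continuity argument), Corollary~\ref{cor: no persistently indifferent} says it is not persistent, Theorem~\ref{thm: Hol Motion} then gives non-normality of the critical orbit at $\lambda_0$, and the Montel argument of Lemma~\ref{lem: roots near indifferent point} specialized to $g=f_{\Delta-1}$ --- for which the trees produced by Lemma~\ref{lem: g(0) = -1} are exactly the Cayley trees $T_{\Delta,m}$ --- yields part (2) without touching the repelling-fixed-point issue at all. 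Either repair completes your proof.
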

	
	In other words, $U_\Delta$ is a maximal zero-free region for the independence polynomials
	of rooted Cayley trees. From the  second part of Theorem \ref{thm: zerofree Cayley} it follows 
	that $D_\Delta \subseteq U_\Delta$. A natural question to pose is whether $D_\Delta = U_\Delta$. 
	This question appears as \emph{Question 2} in \cite{PetersRegts2017}. In this paper we show that 
	this is not the case.\footnote{This was first claimed by Juan Rivera-Letelier and
	Daniel \v{S}tefankovi\v{c} in personal communication.} We prove the following.

	\begin{theorem}
  		\label{thm: Main Theorem}
  		For $\Delta \in \{3,\dots,9\}$ there exist $\lambda \in U_\Delta$ with $G \in \mathcal{G}_\Delta$ 
  		such that $Z_G(\lambda) = 0$.
	\end{theorem}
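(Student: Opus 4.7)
The plan is to exhibit, for each $\Delta \in \{3,\ldots,9\}$, a rooted tree $G_\Delta \in \mathcal{G}_\Delta$ (not a Cayley tree) together with a $\lambda_\Delta \in U_\Delta$ with $Z_{G_\Delta}(\lambda_\Delta) = 0$. The underlying observation is that $U_\Delta$ is defined by the regular branching of the Cayley tree, that is, by the dynamics of the single map $\Phi_\lambda(z) = \lambda/(1+z)^{\Delta-1}$, for which $U_\Delta$ admits an invariant disk avoiding $-1$; irregular trees in $\mathcal{G}_\Delta$ give rise to compositions of several different branching maps and can escape that disk.

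Concretely, for a rooted tree $T$ in $\mathcal{G}_\Delta$ with root children $T_1,\ldots,T_k$, $0 \leq k \leq \Delta-1$, define $R_T(\lambda) = \lambda\prod_{i=1}^{k}(1+R_{T_i}(\lambda))^{-1}$, with $R_T = \lambda$ when $k=0$. A straightforward induction shows $Z_T(\lambda) = 0$ iff $R_{T'}(\lambda) = -1$ for some rooted subtree $T' \subseteq T$. It therefore suffices to find $\lambda \in U_\Delta$ and $T \in \mathcal{G}_\Delta$ with $R_T(\lambda) = -1$. The set of attainable values $\{R_T(\lambda) : T \in \mathcal{G}_\Delta\}$ is the smallest subset of $\mathbb{C}$ containing $\lambda$ and closed under $(z_1,\ldots,z_k) \mapsto \lambda\prod(1+z_i)^{-1}$ for $k \in \{0,\ldots,\Delta-1\}$; it is typically much larger than the $\Phi_\lambda$-orbit of $\lambda$. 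Using the parametrization $\lambda = f_\Delta(\alpha) = -\alpha(\Delta-1)^{\Delta-1}/(\Delta-1+\alpha)^\Delta$ with $|\alpha| < 1$ for $U_\Delta$, the equation $R_T(f_\Delta(\alpha)) + 1 = 0$ becomes a polynomial equation in $\alpha$, and we need a $T$ for which this polynomial has a root with $|\alpha| < 1$.

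To find such witnesses, for each $\Delta$ I would perform a bounded-depth search: enumerate small rooted trees $T$ in $\mathcal{G}_\Delta$, compute $R_T$ as a rational function of $\lambda$ via the recursion, substitute $\lambda = f_\Delta(\alpha)$, and test whether the resulting polynomial has a root in the open unit disk. The main obstacle is twofold. First, one must locate a tree whose branching pattern breaks the regularity of the Cayley recursion enough to escape the $\Phi_\lambda$-invariant disk; small trees are natural candidates, but the right pattern is not obvious in advance. Second, certifying $|\alpha|<1$ for the resulting root requires a rigorous bound, either a symbolic discriminant computation or an interval-arithmetic check. The restriction $\Delta \leq 9$ presumably reflects the range in which a sufficiently small explicit witness tree can be produced and verified; for larger $\Delta$ the contractive action of $\Phi_\lambda$ on its invariant disk may be strong enough that any simple composite recursion also stays away from $-1$, so a fundamentally different construction would be needed.
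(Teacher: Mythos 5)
There is a genuine gap: your proposal is a search plan, not a proof, and it omits the one idea that makes the search tractable. As written, the entire content of the theorem is deferred to an unperformed computation (``enumerate small rooted trees \dots and test whether the resulting polynomial has a root in the open unit disk''), with no witness trees exhibited, no certified roots, and---more importantly---no argument that any tree of bounded size has a zero in $U_\Delta$ at all. Your heuristic (irregular branching ``can escape the invariant disk'') does not establish that the escape ever reaches $-1$ for a parameter inside $U_\Delta$, so nothing in the proposal guarantees that the enumeration terminates successfully. A correct proof must either produce and verify explicit witnesses or give a theoretical existence argument; the proposal does neither.

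The paper's route is structured precisely to avoid the blind search you describe. It fixes a short periodic composition pattern $g = f_{d_k}\circ\cdots\circ f_{d_1}$ (e.g.\ $f_2\circ f_1$ for $\Delta=3$) and numerically solves the explicit algebraic system ``$g_\lambda(z)=z$, $g_\lambda'(z)=1$'' for a parameter $\lambda_0$ lying strictly inside $U_\Delta$ --- a bounded, tractable computation (Table 1). It then proves (Lemma \ref{lem: roots near indifferent point}) that zeros of independence polynomials of trees in $\mathcal{G}_\Delta$ accumulate at any such $\lambda_0$: since indifferent fixed points of elements of $H_\Delta$ are never persistently indifferent (Corollary \ref{cor: no persistently indifferent}, via Lemmas \ref{lem: Attracting Fixed Point} and \ref{lem: Repelling Fixed Point}), the Ma\~n\'e--Sad--Sullivan criterion forces a critical orbit $\lambda\mapsto g_\lambda^n(c(\lambda))$ to be non-normal at $\lambda_0$, and Montel's theorem then yields parameters arbitrarily close to $\lambda_0$ (hence still in the open set $U_\Delta$) at which some composition sends $0$ to $-1$; Lemma \ref{lem: g(0) = -1} converts this into a tree with $Z_T(\lambda)=0$. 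This existence argument is exactly the ingredient missing from your proposal; note also that it gives no effective bound on the size of the witness tree, which is why one should not expect a ``bounded-depth'' enumeration to be justifiable without further work. If you want to salvage the direct approach, you would need to actually carry out the search, exhibit the trees and certify the roots (this has been done in follow-up work), or else import an argument like the paper's to guarantee existence. Finally, your claim that $Z_T(\lambda)=0$ iff $R_{T'}(\lambda)=-1$ for some rooted subtree $T'$ needs the same care as the paper's Lemma \ref{lem: g(0) = -1}: one must pass to a minimal such subtree so that all denominators in the recursion are nonzero.
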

	We will define a region $V_\Delta$ for which we get the inclusions 
	$D_\Delta \subseteq V_\Delta \subseteq U_\Delta$, and we will show that the latter inclusion 
	is strict for $3 \leq \Delta \leq 9$. The 
	definition of $V_\Delta$ is given in Section~\ref{sec: Concluding Remarks}. The other
	sections are dedicated to the proof of Theorem~\ref{thm: Main Theorem}.

	The main tool used in this paper comes from an area of complex dynamics that concerns the analysis
	of stable parameters of families of rational maps.
	\subsection*{Acknowledgment}
		The author would like to thank Han Peters and Guus Regts for useful discussions and advice.
		The author would also like to thank Ferenc Bencs for confirming some numerical 
		results.
\end{section}

\begin{section}{Setup and strategy}
	In this section we give the main definitions and results that we will use to prove 
	Theorem~\ref{thm: Main Theorem}. We will also outline the general strategy that the proof 
	follows. We start by defining the occupation ratio of a rooted tree and we analyze some of 
	its properties. Most definitions in the following subsection appear in \cite{PetersRegts2017}
	and are inspired by \cite{Weitz2006}.
	\begin{subsection}{Iteration of occupation ratios of rooted trees}
		Let $G = (V,E)$ denote a simple graph. For any $v \in V$ we define \emph{the closed 
		neighborhood $N[v]$ of $v$} as 
  		\[
	 		N[v] = \{u \in V: \{u,v\} \in E\} \cup \{v\}.
  		\]
		If $S \subseteq V$ we denote by $G[S]$ the subgraph of $G$ induced by the vertices in
		$S$. We denote the subgraph induced by the complement of $S$, i.e., $G[V\backslash S]$, 
		by $G\backslash S$. Finally, for any $v \in V$ we denote $G\backslash\{v\}$ by $G-v$. By 
		considering independent sets containing $v$ and not containing $v$ separately we obtain 
		the following recurrence relation of independence polynomials
  		\[
	 		Z_{G}(\lambda) = \lambda \cdot Z_{G\backslash N[v]}(\lambda) + Z_{G - v}(\lambda).
  		\]
		If $Z_{G - v}(\lambda) \neq 0$, we define the \emph{occupation ratio at $v$} as
  		\[
	 		R_{G,v}(\lambda)  	= \frac{Z_{G}(\lambda)}{Z_{G - v}(\lambda)} - 1 
	 							= \frac{\lambda \cdot Z_{G\backslash N[v]}(\lambda)}
	 									{Z_{G - v}(\lambda)}.
		\]
		We observe that for those $\lambda$ with $Z_{G - v}(\lambda) \neq 0$ we have that 
		$Z_G(\lambda) = 0$ if and only if $R_{G,v}(\lambda)~=~-1$. Now suppose that $T$ is 
		a tree with root vertex $v$ and $\lambda \in \mathbb{C}$ such that $Z_{T}(\lambda) 
		\neq 0$ and $Z_{T-v}(\lambda) \neq 0$. Define for $d \in \mathbb{Z}_{\geq 1}$ the larger 
		tree $\tilde{T}$ with a root vertex $\tilde{v}$ such that $\tilde{v}$ is attached to $d$ 
		copies of $T$ at their respective root vertices. Then 
		\begin{equation}
			\label{eq: Recurrence Ratios}
	 		R_{\tilde{T},\tilde{v}}(\lambda)	= \lambda \cdot \left(\frac{Z_{T-v}(\lambda)}
	 												{Z_{T}(\lambda)}\right)^d
												= \lambda \cdot \frac{1}{\left(1+R_{T,v}
													(\lambda)\right)^d}.
  		\end{equation}
		So if we define 
  		\[
	 		f_{\lambda,d}(z) = \frac{\lambda}{(1+z)^d},
  		\]
		we find that $R_{\tilde{T},\tilde{v}} = f_{\lambda,d}(R_{T,v}(\lambda))$. The occupation 
		ratio of a graph consisting of a single point is equal to $\lambda$. Therefore, to understand 
		whether $\lambda$ can occur as a zero of the independence polynomial of a finite Cayley tree 
		with branching number $\Delta$ it suffices to determine whether $-1$ appears in the orbit of 
		$\lambda$ under the map $f_{\lambda,\Delta-1}$. This analysis is done in \cite{PetersRegts2017}. 
		Instead of iterating with a single map we will consider iteration by a pattern of different maps 
		$f_{\lambda,d_1}, \dots, f_{\lambda, d_k}$, periodically applied. Effectively we will analyze the 
		roots of the independence polynomials of trees whose down degree is regular at every level.
	\end{subsection}
	\begin{subsection}{The rational semigroups $H_\Delta$}
		In the rest of this paper we will usually drop the subscript $\lambda$ from $f_{\lambda,d}$ and 
		write $f_d$ unless we want to stress a specific parameter $\lambda$. For $\Delta \in \mathbb{Z}_{\geq 3}$ 
		we define the rational semigroup $H_{\Delta}$ as the semigroup generated by $f_1, \dots f_{\Delta-1}$, 
		i.e,
		\[
  			H_\Delta = \left<f_{1},\dots, f_{\Delta-1}\right>.
		\]
		This semigroup consists of families of rational maps with the following property.

		\begin{lemma}
			\label{lem: g(0) = -1}
			Let $g \in H_{\Delta}$. If for some $\lambda \in \mathbb{C}$ we have $g_{\lambda}(0) = -1$. 
			Then there exists a tree $T \in \mathcal{G}_\Delta$ with $Z_T(\lambda) = 0$.
		\end{lemma}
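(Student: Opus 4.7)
The plan is to convert the algebraic identity $g_\lambda(0) = -1$ into a statement about a concrete tree whose occupation ratio at the root equals $-1$, using only the semigroup structure of $H_\Delta$ together with the recurrence \eqref{eq: Recurrence Ratios}. Since $g \in H_\Delta$, write $g = f_{d_k} \circ \cdots \circ f_{d_1}$ for some $k \geq 1$ and $d_1, \dots, d_k \in \{1, \dots, \Delta-1\}$. The key observation that makes the geometric interpretation start is $f_{d_1}(0) = \lambda$, which is exactly the occupation ratio of a single vertex, independently of the choice of $d_1$.

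Starting from this seed, define a finite sequence of rooted trees $S_1, \dots, S_k$ by letting $S_1$ be a single vertex and letting $S_{j+1}$ be the tree obtained by taking a fresh root $v_{j+1}$ and attaching it to $d_{j+1}$ disjoint copies of $S_j$ at their respective roots. A direct degree count places each $S_j$ in $\mathcal{G}_\Delta$: the root of $S_k$ has degree $d_k \leq \Delta - 1$, and every internal non-root vertex has degree $1 + d_i \leq \Delta$ for some $i$, while leaves have degree $1$.

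Next, an induction on $j$ using \eqref{eq: Recurrence Ratios} yields $R_{S_j, v_j}(\lambda) = f_{d_j} \circ \cdots \circ f_{d_1}(0)$, provided each intermediate ratio is defined. Since $Z_{S_{j+1} - v_{j+1}}(\lambda) = Z_{S_j}(\lambda)^{d_{j+1}}$, this reduces to requiring $Z_{S_j}(\lambda) \neq 0$ for $j = 1, \dots, k-1$. Under this assumption, $R_{S_k, v_k}(\lambda) = g_\lambda(0) = -1$, and the identity $Z_{S_k}(\lambda) = (1 + R_{S_k, v_k}(\lambda)) \cdot Z_{S_k - v_k}(\lambda)$ forces $Z_{S_k}(\lambda) = 0$, so the tree $T = S_k$ does the job.

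The only step requiring care, and hence the main obstacle, is the degenerate case in which one of the intermediate partition functions $Z_{S_j}(\lambda)$ vanishes, since then the recurrence \eqref{eq: Recurrence Ratios} is no longer literally applicable at the next step. Fortunately this ``obstruction'' is actually a gift: if $j^* \in \{1, \dots, k-1\}$ is the smallest index with $Z_{S_{j^*}}(\lambda) = 0$, then $T = S_{j^*}$ already satisfies the conclusion, and it lies in $\mathcal{G}_\Delta$ by the same degree bound as above. Thus precisely one of the two constructions --- the full tower $S_k$ or an earlier truncation $S_{j^*}$ --- produces a tree in $\mathcal{G}_\Delta$ with vanishing independence polynomial at $\lambda$.
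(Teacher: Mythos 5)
Your proposal is correct and follows essentially the same route as the paper: both build the tower of trees $S_1,\dots,S_k$ matching the composition $f_{d_k}\circ\cdots\circ f_{d_1}$ and run the occupation-ratio recurrence up the tower. The only cosmetic difference is that the paper forecloses the degenerate case up front by taking $k$ minimal with $(f_{\lambda,d_k}\circ\cdots\circ f_{\lambda,d_1})(0)=-1$, whereas you allow an intermediate $Z_{S_{j^*}}(\lambda)=0$ and observe that the truncated tree then already witnesses the conclusion --- these are equivalent.
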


		\begin{proof}
  			We can write $g = f_{d_n} \circ \cdots \circ f_{d_1}$. Let $k$ be the smallest positive 
  			integer such that $f_{\lambda, d_k} \circ \cdots \circ f_{\lambda, d_1}(0) = -1$. If $k = 1$, 
  			then $\lambda = f_{\lambda,d_1}(0) = -1$ and thus the statement is true since the independence 
  			polynomial of the graph consisting of one vertex is $\lambda + 1$.

  			If $k > 2$ then $\lambda \neq -1$. We let $T_0$ correspond to the empty graph and $T_1$ to 
  			the graph consisting of one root vertex $v_1$. Furthermore, we define for $m \in \{2,\dots, k\}$ 
  			the rooted tree $T_m$ as a root $v_m$ connected to $d_{m}$ copies of $T_{m-1}$ by their respective 
  			root vertices. Note that in this way $T_m \in \mathcal{G}_\Delta$ for all $m$. Also observe that 
  			$Z_{T_m-v_m}(\lambda) = \left(Z_{T_{m-1}}(\lambda)\right)^{d_m}$ and $Z_{T_m \backslash N[v_m]}(\lambda)
  			= \left(Z_{T_{m-2}}(\lambda)\right)^{d_m \cdot d_{m-1}}$. We will prove the following by induction. 
  			For $m \in \{2,\dots, k\}$ we have that
  			\begin{equation}
	 			\label{eq: induction proof}
	 				Z_{T_{l}}(\lambda) \neq 0 \text{ for $0 \leq l < m$} 
						\quad \text{ and } \quad
	 				R_{T_m, v_m} = \left(f_{\lambda, d_m} \circ \cdots \circ f_{\lambda, d_1}\right)(0).
  			\end{equation}
  			For $m = 2$ we have that $Z_{T_0}(\lambda) = 1 \neq 0$ and $Z_{T_1}(\lambda) = 1 + \lambda 
  			\neq 0$. As a result we find that $Z_{T_2 - v_2}(\lambda)$ and $Z_{T_2 \backslash N[v_2]}(\lambda)$
  			are not zero since they are powers of $Z_{T_1}(\lambda)$ and $Z_{T_0}(\lambda)$ respectively.
  			It follows that we can use \eqref{eq: Recurrence Ratios} to calculate the occupation ratio
  			of $T_2$ at $v_2$ by
  			\[
	 			R_{T_2,v_2}(\lambda) 	= f_{\lambda,d_2}(R_{T_1,v_1}) = f_{\lambda,d_2}(\lambda) 
		  								= \left(f_{\lambda,d_2}\circ f_{\lambda,d_1}\right)(0). 
  			\]
  			Now suppose that the statement in \eqref{eq: induction proof} is true for all values less 
  			than a certain $m > 2$. Then we know that $Z_{T_{m-1} - v_{m-1}}(\lambda) \neq 0$ and 
  			that $R_{T_{m-1},v_{m-1}}(\lambda) \neq -1$, which implies that $Z_{T_{m-1}}(\lambda) 
  			\neq 0$. This again implies that 
  			\[
	 			R_{T_m,v_m}(\lambda)  	= f_{\lambda,d_m}(R_{T_{m-1},v_{m-1}}(\lambda))
										= \left(f_{\lambda, d_m} \circ \cdots \circ 
													f_{\lambda, d_1}\right)(0).
  			\]
  			This proves the statement in \eqref{eq: induction proof}. Finally we can conclude that 
  			$R_{T_k,v_k}(\lambda) = -1$, while $Z_{T_k-v_k}(\lambda) \neq 0$. This implies that 
  			$Z_{T_k}(\lambda) = 0$, which concludes the proof since $T_k \in \mathcal{G}_\Delta$.
		\end{proof}
	\end{subsection}
	\begin{subsection}{Stable parameters of rational maps}
		This section contains the relevant results from the area of complex dynamics. The primary 
		object of study is that of the stable parameters of a holomorphic family of rational maps.
		The basis for this section is Chapter 4 of \cite{McMullen1994}. The result that we will state 
		follows from the $\lambda$-Lemma by Ma\~{n}\'{e}, Sad and Sullivan \cite{ManeSadSullivan1983}.

		Let $\CC = \mathbb{C} \cup \{\infty\}$ denote the Riemann sphere and let $\Omega \subseteq \mathbb{C}$
		denote a complex domain. We define a \emph{holomorphic family of rational maps}, parameterized 
		$\Omega$,as a holomorphic map $f: \Omega \times \CC \to \CC$ with the property that for 
		every $\lambda \in \Omega$ the map $z \mapsto f(\lambda,z)$ is a rational map. The first argument 
		of $f$ is thought of as a parameter and the map $\CC \to \CC: z \mapsto f(\lambda, z)$ is often 
		denoted by $f_\lambda$. Note that the elements of $H_\Delta$ are holomorphic families of rational 
		maps with respect to any complex domain. We will use the following definition to state the 
		subsequent theorem.
		\begin{definition}
			\label{def: Pers. Indiff.}
			Let $f$ be holomorphic family of rational maps and let $\lambda_0 \in \Omega$. We call a 
			periodic point $z$ of $f_{\lambda_0}$ with period $n$ \emph{persistently indifferent} if 
			there exists a neighborhood $U$ of $\lambda_0$ and a holomorphic map $w: U \to \CC$ such 
			that 
			\[
				w(\lambda_0) = z, 
					\quad 
				f_\lambda^n(w(\lambda)) = w(\lambda) 
					\quad \text{ and } \quad 
				|{(f_\lambda^n)}'(w(\lambda))| =1 
			\]
			for all $\lambda \in U$.
		\end{definition}
		\begin{theorem}[Part of Theorem 4.2 in \cite{McMullen1994}]
			\label{thm: Hol Motion}
			Let $f$ be a holomorphic family of rational maps parameterized by $\Omega$. And suppose 
			there exist holomorphic maps $c_i: \Omega \to \CC$ parameterizing the critical points of 
			$f$. Let $\lambda_0 \in \Omega$, then the following are equivalent.
			\begin{enumerate}
				\item 
				There is a neighborhood $U$ of $\lambda_0$ such that for all $\lambda \in U$ every 
				periodic point of $f_\lambda$ is either attracting, repelling or persistently indifferent.
				\item
				For all $i$ the families of maps given by
				\[
					\mathcal{F}_i = \{\lambda \mapsto f^n_\lambda(c_i(\lambda))\}_{n \geq 1}
				\]
				are normal at $\lambda_0$.
			\end{enumerate}
		\end{theorem}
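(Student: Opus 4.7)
The plan is to derive the theorem from the $\lambda$-lemma of Ma\~{n}\'{e}-Sad-Sullivan, which asserts that any holomorphic motion of a subset $E \subseteq \CC$ parameterized by a disk extends uniquely to a holomorphic motion of $\overline{E}$, with each time-slice a quasiconformal homeomorphism of $\CC$ that conjugates dynamics where it is defined.

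For $(1)\Rightarrow(2)$: Every repelling periodic cycle of $f_{\lambda_0}$ of period $n$ consists of simple roots of $f_\lambda^n(z)=z$ (its multiplier has modulus $>1$), so the implicit function theorem produces a holomorphic branch $w:U\to\CC$ with $w(\lambda_0)$ on the cycle. The hypothesis forbids the multiplier from crossing the unit circle along this branch, so each repelling cycle persists as a repelling cycle throughout some neighborhood $U$ of $\lambda_0$. Gluing all such branches over all repelling cycles of $f_{\lambda_0}$ gives a holomorphic motion of a set dense in $J(f_{\lambda_0})$, which by the $\lambda$-lemma extends to a holomorphic motion $h:U\times J(f_{\lambda_0})\to\CC$ conjugating $f_{\lambda_0}$ to $f_\lambda$ on Julia sets. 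No critical orbit $\lambda\mapsto f_\lambda^n(c_i(\lambda))$ can intersect $h(U\times J(f_{\lambda_0}))$, for a crossing would place a critical point on the Julia set at a nearby parameter, creating a non-persistent superattracting cycle and violating $(1)$. Picking any three distinct points of $J(f_{\lambda_0})$ and applying $h$, each $\mathcal{F}_i$ omits three holomorphically moving values, so Montel's theorem gives normality at $\lambda_0$.

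For $(2)\Rightarrow(1)$: Suppose each $\mathcal{F}_i$ is normal on some disk $U\ni\lambda_0$ but some periodic point changes character non-persistently at $\lambda_0$. Then, by the argument principle applied to $f_\lambda^n(z)-z$, one locates parameters arbitrarily close to $\lambda_0$ carrying a non-repelling cycle that is not present persistently. Fatou's classical theorem forces every such non-repelling cycle to attract a critical point, so some orbit $\lambda\mapsto f_\lambda^n(c_i(\lambda))$ must accumulate on this moving cycle at those parameters while remaining bounded away from it at $\lambda_0$. By Hurwitz's theorem applied to limits of subsequential normal family limits, this behavior is incompatible with normality of $\mathcal{F}_i$ at $\lambda_0$, giving the required contradiction.

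The main obstacle is $(2)\Rightarrow(1)$: translating normality of critical orbit families into a rigorous ban on non-persistent bifurcations of periodic multipliers. The delicate point is that one must link the global normality hypothesis to strictly local events in $\CC$—the birth or merging of periodic cycles—and this is precisely the heart of the Ma\~{n}\'{e}-Sad-Sullivan theorem, requiring a careful interplay of Montel, Hurwitz, and the Fatou-Julia classification to show that only attracting, repelling, or persistently indifferent behavior survives.
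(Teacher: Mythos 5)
First, a point of comparison: the paper does not prove this statement at all --- it is quoted as ``Part of Theorem 4.2 in \cite{McMullen1994}'' and used as a black box --- so there is no internal proof to measure your attempt against, and I will assess the sketch on its own. You invoke the right circle of ideas ($\lambda$-lemma, Montel, Fatou's theorem on attracting cycles), but the argument has a false step and a conceded gap. In $(1)\Rightarrow(2)$, the claim that a critical orbit crossing $h(U\times J(f_{\lambda_0}))$ would ``create a non-persistent superattracting cycle'' confuses two different phenomena: a superattracting cycle is a \emph{periodic} critical orbit and lies in the Fatou set, whereas a critical point can lie on, or map into, the Julia set without being periodic --- for instance when it is persistently preperiodic to a repelling cycle. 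So the functions $\lambda\mapsto f_\lambda^n(c_i(\lambda))$ need \emph{not} omit the moving Julia set, and Montel cannot be applied as you state. The standard repair is to take as omitted targets three holomorphically moving repelling periodic points that do not lie on the forward orbit of $c_i(\lambda_0)$ and then show that the equations $f_\lambda^n(c_i(\lambda))=z_j(\lambda)$ cannot have solutions accumulating at $\lambda_0$ unless they hold identically (or, alternatively, to treat separately the case in which the entire critical orbit is carried by the motion of $J$, where normality follows from equicontinuity of the motion). Either way, this step needs a genuine argument, not the one given.

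The direction $(2)\Rightarrow(1)$ is the substantive half of the theorem, and your text is an outline rather than a proof: ``by Hurwitz's theorem applied to limits of subsequential normal family limits, this behavior is incompatible with normality'' does not specify which functions converge to what, and your closing paragraph explicitly concedes that this is ``precisely the heart of the Ma\~{n}\'{e}--Sad--Sullivan theorem.'' Concretely, one must establish (a) that a non-persistently indifferent cycle yields parameters $\lambda_k\to\lambda_0$ at which some cycle is attracting but was not attracting at $\lambda_0$; (b) by Fatou, that each such cycle attracts a marked critical point $c_i(\lambda_k)$; and (c) that convergence of $f_{\lambda_k}^n(c_i(\lambda_k))$ to the moving cycle for each $k$, combined with non-convergence at $\lambda_0$, contradicts normality --- which requires uniform control of the cycle and its multiplier in $k$. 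None of this is routine. Since the paper itself treats the statement as a citation, the appropriate course is either to do the same or to reproduce the full argument of McMullen's Chapter 4; as written, the proposal does neither.
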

		Our strategy will be to show that there are $g \in H_\Delta$ with $\lambda_0 \in U_\Delta$ such 
		that $g_{\lambda_0}$ has an indifferent fixed point that is not persistent. Then we will be able 
		to use non-normality of one of the critical points to show that arbitrarily close to $\lambda_0$ we 
		can find $\lambda$ for which we can derive a function $\tilde{g} \in H_\Delta$ with $\tilde{g}_\lambda(0) 
		= -1$. Then we will use Lemma~\ref{lem: g(0) = -1} to prove Theorem \ref{thm: Main Theorem}. This 
		will be made more precise in the next two sections.
	\end{subsection}
\end{section}

\begin{section}{Properties of the maps in $H_\Delta$}
	\begin{subsection}{The critical points}
		To apply Theorem~\ref{thm: Hol Motion} we need an understanding of the behaviour of the critical 
		points of the elements of $H_\Delta$. The following lemma states that for all $g \in H_\Delta$ 
		the critical points move locally holomorphically near all but finitely many $\lambda$.

		\begin{lemma}
			\label{lem: Critical Points}
			Let $g \in H_\Delta$ with $g = f_{d_k} \circ \cdots \circ f_{d_1}$. Let $\lambda_0 \in 
			\mathbb{C} - \{0\}$ be a parameter such that there are no indices $i,j$ with $1 \leq i < j \leq k$ 
			with 
			\begin{equation}
				\label{eq: assumption lemma critical points}
				\left(f_{\lambda_0, d_j} \circ \cdots \circ f_{\lambda_0, d_i}\right)\left(0 \right) = -1.
			\end{equation}
			Then there exists a neighborhood of $\lambda_0$ on which the critical points of $g$ can be 
			parameterized by holomorphic maps.
		\end{lemma}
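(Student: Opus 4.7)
I plan to analyze the critical divisor of $g_\lambda = f_{\lambda,d_k}\circ\cdots\circ f_{\lambda,d_1}$ levelwise. By the chain rule and the multiplicativity of the local degree on $\CC$, a point $z\in\CC$ is critical for $g_\lambda$ exactly when, for some $i\in\{1,\dots,k\}$ with $d_i\geq 2$, the partial iterate $y_{i-1,\lambda}(z) := \bigl(f_{\lambda,d_{i-1}}\circ\cdots\circ f_{\lambda,d_1}\bigr)(z)$ (with $y_{0,\lambda}=\mathrm{id}$) lies in $\mathrm{Crit}(f_{\lambda,d_i})=\{-1,\infty\}$. For each such $i$, I will produce $d_1\cdots d_{i-1}$ holomorphic sections through each of $y_{i-1,\lambda}^{-1}(-1)$ and $y_{i-1,\lambda}^{-1}(\infty)$ on a common neighborhood of $\lambda_0$, and then assemble the critical divisor of $g_\lambda$ as their weighted sum with multiplicities $d_i-1$.

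The key step is to show that $-1$ is not a critical value of $y_{i-1,\lambda_0}$ for any $i$. The analogous statement for $\infty$ then follows from $f_{d_{i-1}}^{-1}(\infty)=\{-1\}$, which gives $y_{i-1}^{-1}(\infty) = y_{i-2}^{-1}(-1)$ for $i\geq 2$ (with $y_0^{-1}(\infty)=\{\infty\}$ trivial). Applying the chain-rule characterization recursively to $y_{i-1}$, its critical values at $\lambda_0$ are the points $(f_{\lambda_0,d_{i-1}}\circ\cdots\circ f_{\lambda_0,d_m})(c)$ with $m\in\{1,\dots,i-1\}$ and $c\in\{-1,\infty\}$. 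Using $f_{\lambda_0,d_m}(-1)=\infty$, $f_{\lambda_0,d_m}(\infty)=0$, and $f_{\lambda_0,d_m}(0)=\lambda_0$ (all valid since $\lambda_0\neq 0$), each such critical value collapses, after at most three applications, to one of $0$, $\infty$, or $\lambda_0$, or else to a longer chain of the form $(f_{\lambda_0,d_{i-1}}\circ\cdots\circ f_{\lambda_0,d_\ell})(0)$ whose index pair lies in the range of \eqref{eq: assumption lemma critical points}. The three special values are visibly different from $-1$, and the longer chains are excluded from being $-1$ by the hypothesis.

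With $-1$ (and hence $\infty$) excluded from the critical value set of each $y_{i-1,\lambda_0}$, the implicit function theorem delivers the required $d_1\cdots d_{i-1}$ holomorphic sections through each of $y_{i-1,\lambda}^{-1}(-1)$ and $y_{i-1,\lambda}^{-1}(\infty)$. Summing over all $i$ with weight $d_i-1$ reconstructs the critical divisor of $g_\lambda$, of total degree $2\deg(g_\lambda)-2$, as a weighted sum of holomorphic sections $U\to\CC$, which is the desired parameterization.

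I anticipate that the main obstacle will be the bookkeeping in the second paragraph: the short chains that terminate before \eqref{eq: assumption lemma critical points} can be applied directly must each be reduced by hand using the identities $f_d(-1)=\infty$, $f_d(\infty)=0$, and $f_d(0)=\lambda_0$, and then checked to differ from $-1$. This is an elementary but careful combinatorial analysis; once it is settled, the remainder is a standard application of the implicit function theorem.
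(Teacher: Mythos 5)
Your proposal is correct and takes essentially the same route as the paper: both identify the critical points of $g_\lambda$ levelwise as the preimages of $\{-1,\infty\}$ under the partial compositions, reduce the $\infty$-fibre to the $-1$-fibre one level down via $f_{\lambda,d}^{-1}(\infty)=\{-1\}$, and apply the implicit function theorem, using hypothesis \eqref{eq: assumption lemma critical points} together with the forced orbit $-1\mapsto\infty\mapsto 0\mapsto\lambda_0$ to rule out degeneracy. The only difference is organizational --- you verify that $-1$ is a regular value of each partial composition by pushing the critical points forward, whereas the paper pulls $-1$ back and checks directly that each chain-rule factor of $\left(f_{\lambda_0,d_l}\circ\cdots\circ f_{\lambda_0,d_1}\right)'(z_0)$ is finite and nonzero --- and note that your claim that $\lambda_0$ is ``visibly different from $-1$'' shares the paper's tacit assumption that the length-one chain $f_{\lambda_0,d_j}(0)=\lambda_0$ is covered, which the hypothesis as stated (requiring $i<j$) does not literally guarantee.
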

		\begin{proof}
			For any $\lambda \in \mathbb{C} - \{0\}$ and $d \geq 2$ the critical points of $f_{\lambda,d}$ 
			are $-1$ and $\infty$. Therefore the critical points of $g_\lambda$ are given by points $z$ for 
			which there is some $i \in \{2,\dots,k\}$ with $d_i \geq 2$ and 
			\[
				\left(f_{\lambda,d_{i-1}} \circ \cdots \circ f_{\lambda,d_1}\right) (z) \in \{-1, \infty\},
			\]
			possibly together with $-1, \infty$ if $d_1\geq 2$. Since for any $d$ and nonzero $\lambda$ we have 
			that $f_{\lambda,d}(z) = \infty$ if and only if $z = -1$, we can write the critical points of 
			$g_\lambda$ as $X_\lambda = Y_\lambda \cup E$, where
			\[
				Y_\lambda = \hspace{-25 pt} \bigcup_{\substack{1 \leq i < k: \\ d_{i+1} \geq 2 \text{ or } 
									d_{i+2} \geq 2}} \hspace{-25 pt} \left\{z: f_{\lambda,d_i} \circ \cdots 
									\circ f_{\lambda,d_1} (z) = -1 \right\}
			\]
			and $E \subseteq \{-1,\infty\}$ with $\infty \in E$ only if $d_1 \geq 2$ and $-1 \in E$ only if 
			$d_1 \geq 2$ or $d_2 \geq 2$. Clearly the critical points in $E$ move holomorphically around 
			any neighborhood of $\lambda_0$ not containing $0$, since they do not depend on the parameter $\lambda$. 
			We will show that we can find a neighborhood of $\lambda_0$ on which the elements of $Y$ can also be 
			parameterized by holomorphic functions. Note that, since $f_{\lambda,d}(\infty) = 0$, it follows from 
			the assumption in (\ref{eq: assumption lemma critical points}) that $-1, \infty \not \in Y_{\lambda_0}$. 
			The Implicit Function Theorem guarantees that the elements of $Y$ move holomorphically near $\lambda_0$ 
			if for all $l$ and $z_0$, where $z_0$ is a solution to 
			\[
				\left(f_{\lambda_0,d_l} \circ \cdots \circ f_{\lambda_0,d_1}\right) (z_0) = -1,
			\]
			we have that
			\begin{equation}
				\label{eq: critical points req}
				\left(f_{\lambda_0,d_l} \circ \cdots \circ f_{\lambda_0,d_1}\right)'(z_0) \not \in \{0,\infty\}.
			\end{equation}
			To show that this is the case we first calculate that
			\[
				f_{\lambda,d}'(z) = -\frac{d}{1+z} \cdot f_{\lambda,d}(z),
			\]
			for all $\lambda,d$. We denote for all $i>0$ the $i$th element of the orbit of $z_0$ by $z_i$, i.e.,
			\[
				z_i = \left(f_{\lambda_0,d_i} \circ \cdots \circ f_{\lambda_0,d_1}\right) (z_0).
			\]
			Now we can write
			\[
				\left(f_{\lambda_0,d_l} \circ \cdots \circ f_{\lambda_0,d_1}\right)'(z_0) = \prod_{i=1}^l - 
					\frac{d_i \cdot z_i}{1 + z_{i-1}}. 
			\]
			The assumption of the lemma now guarantees that $\{-1,\infty,0\} \cap \{z_0, \dots, z_{l-1}\} = 
			\emptyset$ and since $z_l = -1$, we can conclude that the equation in (\ref{eq: critical points req}) 
			holds. The lemma now follows from an application of the Implicit Function Theorem.
		\end{proof}
		\begin{remark}
			\label{rem: critical points}
			Note that it follows from the proof that if $c$ is a holomorphic map parameterizing a critical 
			point of $g = f_{d_k} \circ \cdots \circ f_{d_1}$ on a domain $\Omega$ that either $c$ is 
			constantly $-1$ or $\infty$ on $\Omega$, or there is some index $l$ such that the holomorphic map
			\[
				\lambda \mapsto \left( f_{\lambda,d_l} \circ \cdots \circ f_{\lambda,d_1}\right)\left(c(\lambda)\right)
			\]
			is constantly $-1$. Since $-1$ gets mapped to $0$ in two applications of any two maps $f_d$, 
			independent of the degree of the individual maps and of $\lambda$, we get that there must be some 
			sequence of indices $d_{i_1}, \dots, d_{i_t} \in \{d_1, \dots, d_k\}$ such that
			\[
				g^3_\lambda(c(\lambda)) = \left(f_{\lambda,d_{i_t}} \circ \cdots \circ f_{\lambda,d_{i_1}}\right)
												\left(0\right)
			\]
			for all $\lambda \in \Omega$.
		\end{remark}	
	\end{subsection}
	\begin{subsection}{The indifferent fixed points}
		\label{sec: indifferent fixed points}
		To show that there are $g \in H_\Delta$ with $\lambda_0 \in U_\Delta$ such that $g_{\lambda_0}$ has 
		an indifferent fixed point that is not persistent we first show that there do no exist $g \in H_\Delta$ 
		and $\lambda_0 \in \mathbb{C}$ such that $g_{\lambda_0}$ has a persistently indifferent fixed point. 
		Note that we do not lose generality by considering only fixed points instead of periodic points since 
		$g \in H_\Delta$ implies that $g^N \in H_\Delta$ for any $N \in \mathbb{Z}_{\geq 1}$. The argument relies 
		on the following fact.
		\begin{lemma}
			\label{lem: persistently indifferent}
			Let $f$ be a holomorphic family of rational maps parameterized by a domain $\Omega$.
			Suppose that $\lambda_0 \in \Omega$ is a parameter such that $f_{\lambda_0}$ has a persistently 
			indifferent fixed point. Suppose also that on $\Omega$ we can write
			\begin{equation}
				\label{eq: rational function}
				f_{\lambda}(z) = \frac{p(\lambda,z)}{q(\lambda,z)},
			\end{equation}
			with $p,q \in \mathbb{C}[\lambda,z]$. Then the holomorphic family of rational maps $p/q$, where 
			the parameter plane is now taken to be the whole complex plane, has an indifferent fixed point 
			for all but finitely many parameters $\lambda \in \mathbb{C}$.
		\end{lemma}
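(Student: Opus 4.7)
The plan is to convert the local analytic persistence at $\lambda_0$ into a global algebraic statement on the fixed point curve of $p/q$. First, let $w\colon U \to \CC$ be the holomorphic map witnessing persistent indifference of the fixed point, as in Definition~\ref{def: Pers. Indiff.} with $n=1$. The multiplier $\lambda \mapsto f_\lambda'(w(\lambda))$ is holomorphic on $U$ and takes values on the unit circle, so by the open mapping theorem it must be a constant $\rho_0 \in \mathbb{C}$ with $|\rho_0| = 1$. Thus we have not merely persistent indifference near $\lambda_0$ but persistent indifference with one and the same multiplier $\rho_0$.

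Second, I would upgrade this to a global identity on an algebraic curve. The set of fixed points of $p/q$ is cut out by the polynomial $P(\lambda, z) = p(\lambda, z) - z \cdot q(\lambda, z)$, so $\{P = 0\}$ is an affine algebraic curve $C \subseteq \mathbb{C}^2$. The graph of $w$ over $U$ is a connected one-complex-dimensional analytic subvariety of $C$, so it is contained in a single irreducible algebraic component $C_0$ of $C$ and agrees with $C_0$ on an open subset of the smooth locus of $C_0$. The multiplier
\[
\rho(\lambda, z) = \frac{p_z(\lambda, z) \cdot q(\lambda, z) - p(\lambda, z) \cdot q_z(\lambda, z)}{q(\lambda, z)^2}
\]
restricts to a meromorphic function on the normalization of $C_0$; this restriction equals $\rho_0$ on the open piece coming from the graph of $w$, and therefore, by the identity principle for meromorphic functions on a connected complex $1$-manifold, it equals $\rho_0$ identically on $C_0$ wherever defined.

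Third, I would spread this conclusion to almost every parameter by a projection argument. The morphism $\pi\colon C_0 \to \mathbb{C}$, $(\lambda, z) \mapsto \lambda$, is dominant because its image contains the open set $U$. Passing to the projective closure $\overline{C_0} \subseteq \CC \times \CC$, the induced map $\overline{\pi}\colon \overline{C_0} \to \CC$ is proper and has dense image, hence is surjective. Therefore $\pi(C_0)$ omits at most finitely many values of $\lambda \in \mathbb{C}$, and after excluding a further finite set (singular points of $C_0$, fibre points at which $q$ vanishes, and fibres that meet only the line at infinity) we obtain, for every remaining $\lambda \in \mathbb{C}$, a finite fixed point $z$ of $p(\lambda, \cdot)/q(\lambda, \cdot)$ with multiplier $\rho_0$, hence an indifferent fixed point.

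The main obstacle I anticipate is not conceptual but bookkeeping in this final step: one must verify that among the (generically finitely many) points in the fibre $\pi^{-1}(\lambda) \subseteq C_0$ there is always, for all but finitely many $\lambda$, one that is a genuine finite fixed point with $q \neq 0$, so that the evaluation $f_\lambda'(z) = \rho_0$ makes sense and witnesses indifference. This is handled by standard properness and Bezout-type finiteness arguments for the compactified projection $\overline{\pi}$, but it is the step that actually produces the "all but finitely many" conclusion from the purely local information available at $\lambda_0$.
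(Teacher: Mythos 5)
Your argument is correct and reaches the same conclusion, but it replaces the paper's key elimination step by a different one. Both proofs begin identically: the multiplier $\lambda\mapsto f_\lambda'(w(\lambda))$ is holomorphic with constant modulus $1$, hence constant by the open mapping theorem. From there the paper stays purely algebraic: it forms $l=p-zq$ and $m=s-\alpha t$ (numerator and denominator of $f_\lambda'$ minus $\alpha$ times the denominator) and observes that the resultant $\Res_z(l,m)\in\mathbb{C}[\lambda]$ vanishes for infinitely many $\lambda\in U$, hence identically, so $l(\lambda,\cdot)$ and $m(\lambda,\cdot)$ share a root for every $\lambda$; a coprimality lemma then converts "common root" back into "indifferent fixed point" for all but finitely many $\lambda$. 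You instead work geometrically on the fixed-point curve $\{p-zq=0\}$: the graph of $w$ singles out one irreducible component $C_0$, the multiplier function is constant on $C_0$ by the identity principle on its normalization, and the dominant projection $C_0\to\mathbb{C}$ has cofinite image. The two routes are equivalent ways of eliminating $z$; yours is slightly heavier machinery (normalization, constructibility/properness of the projection) but makes visible that the indifferent fixed point moves along a single algebraic curve, whereas the resultant argument is shorter and entirely self-contained. One point you should make explicit, as the paper does in its first line: reduce to $p,q$ coprime in $\mathbb{C}(\lambda)[z]$. Without that reduction the component $C_0$ containing the graph of $w$ could a priori lie inside the common zero locus of $p$ and $q$, where your multiplier formula $(p_zq-pq_z)/q^2$ is identically indeterminate and the identity-principle step breaks down; with coprimality, $q$ cannot vanish identically on $C_0$ (which is neither finite nor a vertical line), and your argument goes through.
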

		The proof of this lemma is elementary and can be found in the appendix. 

		Any $g \in H_\Delta$ can be written in the form displayed in (\ref{eq: rational function}). 
		A consequence of Lemma~\ref{lem: persistently indifferent} is now that if we can find a region 
		of parameters for which some $g \in H_\Delta$ has no indifferent fixed points, then we can conclude 
		that $g$ has no persistently indifferent fixed points for any parameter $\lambda$. We will prove that 
		this is the case for all $g \in H_\Delta$ by describing the fixed points of $g_\lambda$ for $\lambda$ 
		near $0$. These results are found in the next two lemmas.

		\begin{lemma}
			\label{lem: Attracting Fixed Point}
			Let $g \in H_\Delta$ and 
			$\lambda \in \mathbb{C}$ with $|\lambda| < \frac{(\Delta-1)^{\Delta-1}}{\Delta^{\Delta}}$. 
			Then $g_\lambda$ has an attracting fixed point.  
		\end{lemma}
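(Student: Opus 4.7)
The plan is to locate a disk centred at the origin that each generator $f_{\lambda,d}$ of $H_\Delta$ maps strictly into itself, and then to extract an attracting fixed point of $g_\lambda$ from the Schwarz--Pick lemma. The natural choice of radius is $r = 1/\Delta$: an elementary calculation shows that the real function $r \mapsto r(1-r)^{\Delta-1}$ attains its maximum on $[0,1]$ at precisely $r = 1/\Delta$, with maximum value $\tfrac{(\Delta-1)^{\Delta-1}}{\Delta^\Delta}$. This is the exact constant appearing in the statement, which explains why it enters the argument.

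First I would verify that under the hypothesis on $\lambda$, every generator $f_{\lambda,d}$ with $1 \le d \le \Delta - 1$ sends $\overline{D}(0, 1/\Delta)$ into $D(0, 1/\Delta)$. On this disk one has $|1+z| \ge (\Delta-1)/\Delta$, so
\[
  |f_{\lambda,d}(z)| \;\le\; |\lambda| \cdot \frac{\Delta^d}{(\Delta-1)^d},
\]
and requiring this to be $< 1/\Delta$ amounts to $|\lambda| < (\Delta-1)^d / \Delta^{d+1}$. This bound is decreasing in $d$, so it is tightest at $d = \Delta - 1$, where it coincides with the hypothesis of the lemma; for smaller $d$ it is automatic. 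Composing the generators gives $g_\lambda(\overline{D}(0, 1/\Delta)) \subseteq D(0, 1/\Delta)$.

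To conclude, I would run the standard Schwarz--Pick argument on an invariant disk. Since $g_\lambda(\overline{D}(0, 1/\Delta))$ is a compact subset of the open disk $D(0, 1/\Delta)$, the holomorphic self-map $g_\lambda : D(0, 1/\Delta) \to D(0, 1/\Delta)$ is a strict contraction of the Poincar\'e metric; completeness of this metric together with Banach's fixed point theorem produce a unique fixed point $z_0 \in D(0, 1/\Delta)$. Strict contraction of the hyperbolic metric at an interior fixed point forces $|g_\lambda'(z_0)| < 1$, so $z_0$ is attracting. The degenerate case $\lambda = 0$ is immediate, as then $g_\lambda \equiv 0$.

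There is no real obstacle in this argument; its only content is the optimization identifying $r = 1/\Delta$ as the right radius and the comparison of the contraction thresholds across the generators $f_{\lambda, 1}, \dots, f_{\lambda, \Delta-1}$. Everything else is a direct application of Schwarz--Pick.
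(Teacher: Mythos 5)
Your proposal is correct and follows essentially the same route as the paper: both work with the disk of radius $1/\Delta$ about the origin, check that the hypothesis on $|\lambda|$ makes every generator $f_{\lambda,d}$ map this disk strictly into itself (the worst case being $d=\Delta-1$), and then extract the attracting fixed point of the composition $g_\lambda$ via strict contraction of the Poincar\'e metric and the Banach fixed point theorem. Your explicit handling of the case $\lambda=0$ and of why the fixed point is attracting are minor refinements of the same argument.
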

		\begin{proof}
			Write $g = f_{d_k} \circ \cdots \circ f_{d_1}$ and let $B$ be a open 
			disc of radius $\frac{1}{\Delta}$ centered around zero. Then for any $d \in \{1, \dots, \Delta - 1\}$ 
			and $z \in \overline{B}$ we have 
			\[
				|f_{d,\lambda}(z)| = \frac{|\lambda|}{|(1+z)|^d} \leq \frac{|\lambda|}{(1-|z|)^{\Delta - 1}}
				< \frac{\frac{(\Delta-1)^{\Delta-1}}{(\Delta)^{\Delta}}}{\left(1-\frac{1}{\Delta}\right)^{\Delta-1}}
				= \frac{1}{\Delta}.
			\]
			This means that $B$ gets mapped strictly into itself by all the maps $f_{d_i,\lambda}$ and 
			thus also by $g_{\lambda}$. This means that $g_\lambda$ viewed as a map from $B$ to 
			itself is a strict contraction with respect to the Poincar\'e metric and thus $g_\lambda$
			is guaranteed to have an attracting fixed point inside $B$ by the Banach fixed point theorem.
		\end{proof}
		Note that it follows from the proof that the disc of radius $(\Delta-1)^{\Delta-1}/\Delta^{\Delta}$ is a 
		zero-free region of $Z_T$ for all trees $T \in \mathcal{G}_\Delta$, where the down degree is regular at every 
		level. Scott and Sokal show in \cite[Cor. 5.7]{ScottSokal} that this region remains zero-free for $Z_G$ for 
		all $G \in \mathcal{G}_\Delta$, even in the multivariate case (see also \cite{Shearer1985}).
		It turns out that we can also describe the repelling fixed points of elements in $H_\Delta$ for 
		all parameters inside this region.

		\begin{lemma}
			\label{lem: Repelling Fixed Point}
			Let $g \in H_\Delta$ and write $g = f_{d_k} \circ \cdots \circ f_{d_1}$.
			Let $\lambda \in \mathbb{C}- \{0\}$ with $|\lambda| < \frac{(\Delta-1)^{\Delta-1}}{\Delta^{\Delta}}$. 
			Then $g_\lambda$ has $d_1 \cdots d_k$ distinct repelling fixed points.
		\end{lemma}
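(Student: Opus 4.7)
My plan is to construct one repelling fixed point of $g_\lambda$ for each of the $D = d_1 \cdots d_k$ branches of $g_\lambda^{-1}$ by restricting those branches to the complement of the contracting disc $B$ from Lemma~\ref{lem: Attracting Fixed Point} and applying the Schwarz--Pick lemma. First, I would check by induction on $k$ that $g_\lambda = P/Q$ with $\deg_z P = \deg_z Q = D$ and that $g_\lambda(\infty) = \lambda \ne \infty$; the fixed-point polynomial $P - zQ$ then has degree exactly $D+1$, so $g_\lambda$ has $D+1$ fixed points on $\CC$ counted with multiplicity. The attracting fixed point in $B$ from Lemma~\ref{lem: Attracting Fixed Point} accounts for one of these, so the task reduces to locating the remaining $D$ fixed points in $V := \CC \setminus \overline{B}$.

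For each $d \in \{1, \dots, \Delta-1\}$ and each $d$-th root of unity $\omega$, set $h_{d,\omega}(w) = -1 + \omega (\lambda/w)^{1/d}$, so that $f_{d,\lambda} \circ h_{d,\omega}$ is the identity. The key estimate is that $h_{d,\omega}$ sends $V$ into a relatively compact subset of $V$. For $w \in V$ one has $|h_{d,\omega}(w)+1| = (|\lambda|/|w|)^{1/d} < (|\lambda|\Delta)^{1/d}$, and the hypothesis $|\lambda| < (\Delta-1)^{\Delta-1}/\Delta^\Delta \leq (\Delta-1)^d/\Delta^{d+1}$ (using that $d \mapsto (\Delta-1)^d/\Delta^{d+1}$ is non-increasing on $\{1,\dots,\Delta-1\}$) forces $(|\lambda|\Delta)^{1/d} < (\Delta-1)/\Delta$; the reverse triangle inequality then yields $|h_{d,\omega}(w)| > 1/\Delta$ uniformly in $w$. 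Composing, for each tuple $\alpha = (\omega_1,\dots,\omega_k)$ with $\omega_i^{d_i}=1$ the map
\[
\sigma_\alpha := h_{d_1,\omega_1} \circ \cdots \circ h_{d_k,\omega_k}
\]
is a holomorphic self-map of $V$ whose image has compact closure in $V$. Since $V$ is a simply connected hyperbolic domain, Schwarz--Pick makes $\sigma_\alpha$ a uniform contraction in the Poincar\'e metric, so $\sigma_\alpha$ has a unique fixed point $z_\alpha \in V$ with $|\sigma_\alpha'(z_\alpha)| < 1$. Since $g_\lambda \circ \sigma_\alpha$ is the identity, $g_\lambda(z_\alpha) = z_\alpha$ and $|g_\lambda'(z_\alpha)| = 1/|\sigma_\alpha'(z_\alpha)| > 1$, so $z_\alpha$ is a repelling fixed point of $g_\lambda$.

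It remains to show that the $D$ points $z_\alpha$ are pairwise distinct and exhaust the fixed points of $g_\lambda$ outside $B$. Any fixed point $z$ of $g_\lambda$ in $V$ has its full orbit $(z, f_{d_1,\lambda}(z), \dots)$ inside $V$, because Lemma~\ref{lem: Attracting Fixed Point} gives $f_{d_i,\lambda}(\overline{B}) \subset B$ and hence any periodic orbit meeting $\overline B$ is forced to lie entirely in $\overline B$, contradicting uniqueness of the attracting fixed point there. From such an orbit the data $\omega_i = (w_{i-1}+1)/(\lambda/w_i)^{1/d_i}$ is uniquely recovered, so distinct tuples $\alpha$ produce distinct $z_\alpha$. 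Each $z_\alpha$ has multiplier different from $1$ and is therefore a simple root of $P - zQ$; together with the simple attracting fixed point in $B$, these account for all $D+1$ fixed points. The step I expect to be most delicate is the monotonicity estimate ensuring $\overline{h_{d,\omega}(V)} \subset V$ uniformly for every $d \le \Delta-1$, since this is precisely what makes $(\Delta-1)^{\Delta-1}/\Delta^\Delta$ the correct threshold regardless of the specific sequence $(d_1,\dots,d_k)$.
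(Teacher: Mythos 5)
There is a genuine gap at the first step of your construction: for $d \ge 2$ the map $h_{d,\omega}(w) = -1 + \omega(\lambda/w)^{1/d}$ is not a well-defined holomorphic function on $V = \CC\setminus\overline{B}$. The point $\infty$ lies in $V$ and is a critical value of $f_{d,\lambda}$ (it is the image of the critical point $-1$), so the inverse of $f_{d,\lambda}$ is branched over a point of $V$. Concretely, as $w$ traverses the circle $|w|=1 \subset V$ once, $\lambda/w$ winds once around $0$ and $(\lambda/w)^{1/d}$ returns multiplied by a nontrivial $d$-th root of unity; hence no single-valued branch of $f_{d,\lambda}^{-1}$ exists on all of $V$, and $\sigma_\alpha$ is not a holomorphic self-map of $V$. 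The same ill-definedness infects the recovery formula $\omega_i = (w_{i-1}+1)/(\lambda/w_i)^{1/d_i}$ in your distinctness argument. (You flag the threshold estimate as the delicate step; that estimate is in fact fine — the real delicacy is choosing a domain on which the inverse branches are single-valued.)

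The repair is the one your own estimate points to, and it is exactly the paper's proof. Your computation shows that the inverse images land in the disc $B' = \{z : |z+1| < (\Delta-1)/\Delta\}$, which is simply connected and avoids both critical values $0$ and $\infty$ of every $f_{d,\lambda}$ (since $0,\infty \notin \overline{B'}$). Therefore on $B'$ each $f_{d,\lambda}$ has exactly $d$ single-valued inverse branches with pairwise disjoint images, and the bound $|z| > 1/\Delta$ for $z \in B'$ gives, by the same arithmetic as yours, that these images are compactly contained in $B'$. Composing yields $d_1\cdots d_k$ contracting inverse branches of $g_\lambda$ on $B'$ with pairwise disjoint images; their attracting fixed points are distinct repelling fixed points of $g_\lambda$, and disjointness of the images replaces your branch-recovery argument. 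The additional counting of all $D+1$ fixed points of $g_\lambda$ via the degree of $P - zQ$ is correct but not needed for the statement.
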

		\begin{proof}
			For this proof denote $g = g_\lambda$.
			Let $B$ be an open disc of radius $\frac{\Delta-1}{\Delta}$ centered around $-1$. Let 
			$d \in \{1,\dots,\Delta-1\}$ and let $h(z) = \lambda/z^d$. Since $\overline{B}$ does not 
			intersect the positive real axis, we find that the inverse image $h^{-1}(B)$ consists of 
			$d$ disjoint domains $V_1, \dots, V_d$ such that for each $i$ the map $h|_{V_i}: V_i \to B$ 
			is a biholomorphism. Denote the inverse branches as $h^{-1}_1,\dots, h^{-1}_d$. Then for all 
			$z \in B$ all $i$ we have
			\[
				\left|h^{-1}_i(z)\right| = \left(\frac{|\lambda|}{|z|}\right)^{1/d} < 
					\left(\frac{\frac{(\Delta-1)^{\Delta-1}}{\Delta^{\Delta}}}{1-\frac{\Delta-1}{\Delta}}\right)^{1/d}
					=\left(\frac{\Delta-1}{\Delta}\right)^{(\Delta-1)/d} \leq \frac{\Delta-1}{\Delta}.
			\]
			The inverse branches of $f_{\lambda,d}$ on $B$ are given by $z \mapsto h^{-1}_i(z)-1$. If we 
			denote $U_i = h^{-1}_i(B)-1$, we see that $f_{\lambda,d}^{-1}(B) = U_1 \cup \cdots \cup U_d$, 
			$U_i \subsetneq B$ and $U_i \cap U_j = \emptyset$ for all $i,j$ with $i \neq j$. Furthermore, 
			$f_{\lambda,d}|_{U_i}$ is a biholomorphism for all $i$. By composition, we find that there are 
			$d_1 \cdots d_k$ inverse branches of $g$ on $B$, denoted by $g_1^{-1}, \dots, g^{-1}_{d_1 \dots, d_k}$ 
			with pairwise disjoint domains $W_1 \dots, W_{d_1\cdots d_k} \subsetneq B$ such that 
			$g^{-1}_i: B \to W_i$ is a biholomorphism for all $i$. Since $W_i$ is a strict subset of $B$ we find 
			that $g^{-1}_i$ is a strict contraction on $B$. Therefore, by the same reasoning as in Lemma 
			\ref{lem: Attracting Fixed Point}, $g^{-1}_i$ must have an attracting fixed point inside $W_i$. 
			This attracting fixed point of $g^{-1}_i$ is a repelling fixed point for $g$. Since every subset $W_i$ 
			contains such a point, we find that there are $d_1 \cdots d_k$ distinct repelling fixed points inside $B$.
		\end{proof}
		The previous three lemmas combined imply the following result.
		\begin{corollary}
			\label{cor: no persistently indifferent}
			Let $g \in H_\Delta$ be parameterized by some domain $\Omega$ and let $\lambda_0 \in \Omega$ such that 
			$g_{\lambda_0}$ has an indifferent fixed point. Then this fixed point is not persistently indifferent. 
		\end{corollary}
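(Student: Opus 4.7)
The plan is to argue by contradiction. Suppose that $g_{\lambda_0}$ has a persistently indifferent fixed point. Since every element of $H_\Delta$ is a composition of rational maps of the form $\lambda/(1+z)^d$, we can write $g_\lambda(z) = p(\lambda,z)/q(\lambda,z)$ for polynomials $p,q \in \mathbb{C}[\lambda,z]$. Extending the parameter to all of $\mathbb{C}$, Lemma~\ref{lem: persistently indifferent} then guarantees that the family $p/q$ has an indifferent fixed point for all but finitely many $\lambda \in \mathbb{C}$. In particular, we can choose such a parameter $\lambda_1 \in \mathbb{C} - \{0\}$ with $|\lambda_1| < (\Delta-1)^{\Delta-1}/\Delta^\Delta$, and the goal becomes to derive a contradiction by showing $g_{\lambda_1}$ has no indifferent fixed point.

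To that end I would locate all fixed points of $g_{\lambda_1}$ and verify that each is either attracting or repelling. By Lemma~\ref{lem: Attracting Fixed Point}, $g_{\lambda_1}$ has an attracting fixed point lying in the open disc of radius $1/\Delta$ around $0$. By Lemma~\ref{lem: Repelling Fixed Point}, $g_{\lambda_1}$ has $d_1 \cdots d_k$ distinct repelling fixed points, each lying in the open disc of radius $(\Delta-1)/\Delta$ around $-1$. These two discs are disjoint, so $g_{\lambda_1}$ has at least $d_1 \cdots d_k + 1$ distinct fixed points in $\mathbb{C}$.

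It remains to argue that these exhaust all the fixed points, leaving no room for an indifferent one. Writing $g_{\lambda_1}$ as the composition of $k$ rational maps of degrees $d_1, \dots, d_k$ shows that its degree as a rational self-map of $\widehat{\mathbb{C}}$ is at most $d_1 \cdots d_k$. Equivalently, after clearing denominators, the fixed point equation $p(\lambda_1,z) - z\, q(\lambda_1,z) = 0$ is a polynomial in $z$ of degree at most $d_1 \cdots d_k + 1$, so $g_{\lambda_1}$ admits at most $d_1 \cdots d_k + 1$ fixed points counted with multiplicity in $\widehat{\mathbb{C}}$. Together with the lower bound from the two lemmas, every fixed point is accounted for as either attracting or repelling, yielding the desired contradiction with the existence of an indifferent fixed point at $\lambda_1$.

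The main thing to be careful about is the bookkeeping in the last paragraph: one should verify that the two discs from Lemmas~\ref{lem: Attracting Fixed Point} and~\ref{lem: Repelling Fixed Point} are indeed disjoint (if $|z| < 1/\Delta$ then $|z+1| \geq (\Delta-1)/\Delta$, so they are) and that $\infty$ is not an extra fixed point of $g_{\lambda_1}$, which follows from $f_{\lambda,d}(\infty) = 0$ for every $d \geq 1$ and nonzero $\lambda$, so that $g_{\lambda_1}(\infty)$ is finite. Neither of these is a serious obstacle, and once both are noted the proof closes.
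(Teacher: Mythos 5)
Your proposal is correct and is precisely the argument the paper intends: combine Lemma~\ref{lem: persistently indifferent} with Lemmas~\ref{lem: Attracting Fixed Point} and~\ref{lem: Repelling Fixed Point}, using the fixed-point count $d_1\cdots d_k + 1$ for a degree-$d_1\cdots d_k$ rational map to see that all fixed points near $\lambda=0$ are attracting or repelling. The paper leaves this combination implicit ("the previous three lemmas combined imply the following result"), and your careful checks of the disjointness of the two discs and of $\infty$ not being fixed are exactly the right details to verify.
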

		\end{subsection}
\end{section}

\begin{section}{Proof of the main theorem}
	In this section we provide a proof for Theorem~\ref{thm: Main Theorem}. The essential idea is contained in 
	the following lemma.
	\begin{lemma}
		\label{lem: roots near indifferent point}
		Let $g \in H_\Delta$ not of the form $f_1^N$ and $\lambda_0 \in \mathbb{C}$ such that $g_{\lambda_0}$
		has an indifferent fixed point. Then for every neighborhood $U$ of $\lambda_0$ there exists a 
		$\lambda \in U$ and a tree $T \in \mathcal{G}_\Delta$ such that $Z_T(\lambda)=0$.
	\end{lemma}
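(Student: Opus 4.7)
Write $g = f_{d_k}\circ\cdots\circ f_{d_1}$. As a preliminary reduction, if there exist indices $1\leq i\leq j\leq k$ with $(f_{\lambda_0,d_j}\circ\cdots\circ f_{\lambda_0,d_i})(0) = -1$, then the element $\tilde g = f_{d_j}\circ\cdots\circ f_{d_i}$ of $H_\Delta$ satisfies $\tilde g_{\lambda_0}(0) = -1$, and Lemma~\ref{lem: g(0) = -1} applied at $\lambda = \lambda_0 \in U$ already produces the desired tree. So I may assume that no such pair of indices exists. Then the hypothesis of Lemma~\ref{lem: Critical Points} is met, yielding a neighborhood $\Omega\subseteq U$ of $\lambda_0$ and holomorphic parameterizations $c_1,\ldots,c_r:\Omega\to\CC$ of the critical points of $g$. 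The assumption $g\neq f_1^N$ forces some $d_i\geq 2$, so $r\geq 1$.

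By Corollary~\ref{cor: no persistently indifferent}, the indifferent fixed point of $g_{\lambda_0}$ cannot be persistently indifferent, so condition (1) of Theorem~\ref{thm: Hol Motion} fails at $\lambda_0$; by the equivalence in that theorem, condition (2) fails as well. Consequently there is an index $i$ for which the family $\mathcal{F}_i = \{\lambda\mapsto g_\lambda^n(c_i(\lambda))\}_{n\geq 1}$ is not normal at $\lambda_0$. By Remark~\ref{rem: critical points} there exists $h\in H_\Delta$ with $g_\lambda^3(c_i(\lambda)) = h_\lambda(0)$ identically on $\Omega$. Setting $G_n := g^{n-3}\circ h\in H_\Delta$ for $n\geq 3$, the family $\{\lambda\mapsto (G_n)_\lambda(0)\}_{n\geq 3}$ coincides with the tail of $\mathcal{F}_i$, and is therefore also non-normal at $\lambda_0$.

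The remaining task is to find $\lambda\in U$ and $\tilde g\in H_\Delta$ with $\tilde g_\lambda(0) = -1$; an application of Lemma~\ref{lem: g(0) = -1} then completes the proof. By Montel's theorem the non-normal family $\{(G_n)_\lambda(0)\}_{n\geq 3}$ omits at most two complex values on $U$, so if $-1$ is not exceptional then $\tilde g = G_n$ works for some $n$ and some $\lambda\in U$. The main obstacle is the case where $-1$ happens to be one of the two exceptional values for this particular family. To handle it, I would consider the further post-composed families $\{\lambda\mapsto (f_d\circ G_n)_\lambda(0)\}_{n\geq 3}$ for each $d\in\{1,\ldots,\Delta-1\}$. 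These still lie in $H_\Delta$ and remain non-normal at $\lambda_0$, since post-composition with a non-constant rational map preserves non-normality. For any such family to omit $-1$ globally on $U$ translates, through $f_{d,\lambda}(z) = -1$, into $(G_n)_\lambda(0)$ missing each of the $d$ algebraic branches $z(\lambda) = -1 + \zeta\cdot(-\lambda)^{1/d}$, $\zeta^d = 1$. The plan is to combine Montel's constraint of at most two omitted constants across these $\Delta-1$ distinct non-normal families, using the hypothesis $g\neq f_1^N$ (which makes at least one generator $f_d$ with $d\geq 2$ genuinely available) to rule out simultaneous omission of $-1$. This forces some $\tilde g = f_d\circ G_n \in H_\Delta$ to satisfy $\tilde g_\lambda(0) = -1$ for some $\lambda \in U$, delivering the required tree via Lemma~\ref{lem: g(0) = -1}.
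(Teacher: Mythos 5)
Your argument tracks the paper's proof up to and including the application of Montel's theorem: the reduction via Lemma~\ref{lem: g(0) = -1} when some partial composition already sends $0$ to $-1$, the use of Lemma~\ref{lem: Critical Points}, Corollary~\ref{cor: no persistently indifferent} together with Theorem~\ref{thm: Hol Motion} to produce a critical point with non-normal orbit family, and Remark~\ref{rem: critical points} to convert that family into $\{\lambda \mapsto (g_\lambda^{n}\circ h_\lambda)(0)\}$. All of that is correct and is exactly the paper's route. The problem is the final step, which you correctly identify as the crux (the family might omit $-1$ as one of its two Montel-exceptional values) but then only sketch a ``plan'' for rather than prove. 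As written, that plan has two unproved claims: (i) that non-normality of $\{\lambda\mapsto (G_n)_\lambda(0)\}$ is inherited by $\{\lambda\mapsto (f_{d}\circ G_n)_\lambda(0)\}$ (post-composition with a fixed non-constant rational map preserves normality, but the converse direction you need is not automatic --- by Marty's criterion the spherical derivative can be killed when the orbit approaches a critical point of $f_{d,\lambda}$, and here $f_{d,\lambda}$ moreover varies with $\lambda$); and (ii) that one can ``combine Montel's constraint across the $\Delta-1$ families'' to rule out simultaneous omission of $-1$. For (ii), note that $(f_d\circ G_n)_\lambda(0)=-1$ is equivalent to $(G_n)_\lambda(0)$ hitting one of the $d$ moving points $-1+\zeta(-\lambda)^{1/d}$; the two-omitted-constants form of Montel does not apply to moving targets, so you would need the three-holomorphic-functions version of Montel and a genuine argument that three suitable targets among these branches are hit --- none of which is supplied.

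The paper closes this gap much more simply, and without any case analysis on which values are exceptional. Apply Montel to the single non-normal family with the \emph{three} constant values $0$, $\infty$, $-1$: in any neighborhood of $\lambda_0$ there are $\lambda$ and $N\geq 3$ with $(g_\lambda^N\circ h_\lambda)(0)\in\{0,\infty,-1\}$. If the value is $-1$ you are done. If it is $\infty$ or $0$, use the structure of the generators: $f_{\lambda,d}(z)=\infty$ forces $z=-1$, and $f_{\lambda,d}(z)=0$ forces $z=\infty$, which in turn forces the input one step earlier to be $-1$. Writing $g^N\circ h=f_{\tilde d_2}\circ f_{\tilde d_1}\circ\tilde g$ (possible since $N\geq 3$), the value $\infty$ gives $(f_{\tilde d_1}\circ\tilde g)_\lambda(0)=-1$ and the value $0$ gives $\tilde g_\lambda(0)=-1$; either way Lemma~\ref{lem: g(0) = -1} applies to an element of $H_\Delta$. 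I recommend replacing your exceptional-value plan with this peeling-off argument; it is the one place where your write-up is not a proof.
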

	\begin{proof}
		Write $g = f_{d_k} \circ \cdots \circ f_{d_1}$. If there are indices $i,j$ with $1 \leq i < j \leq k$
		such that $\left(f_{\lambda_0, d_j} \circ \cdots \circ f_{\lambda_0, d_i}\right)\left(0 \right) = -1$,
		then we can apply Lemma~\ref{lem: g(0) = -1} on $f_{d_j} \circ \cdots \circ f_{d_i}$ to find that 
		there is a tree $T \in \mathcal{G}_\Delta$ such that $Z_T(\lambda_0) = 0$, so in this case the 
		statement is true. If these indices do not exist, then we apply Lemma~\ref{lem: Critical Points} to get a 
		domain $\Omega$ containing $\lambda_0$ on which the critical points of $g$ can be parameterized
		by holomorphic maps. Note that, since $g$ is not of the form $f_1^N$, $g$ has 
		critical points. By Corollary~\ref{cor: no persistently indifferent}, the indifferent fixed 
		point of $g_{\lambda_0}$ is not persistently indifferent and thus the first statement of 
		Theorem~\ref{thm: Hol Motion} is not fulfilled. Therefore there is at least one marked critical point 
		$c$ such that the family defined by 
		\[
			\left\{\lambda \mapsto g_\lambda^n(c(\lambda))\right\}_{n \geq 1}
		\] 
		is not normal around $\lambda_0$. From Remark~\ref{rem: critical points} it follows that
		there is some $h\in H_\Delta$ such that 
		\[
			\left\{\lambda \mapsto (g_\lambda^n\circ h_\lambda)(0)\right\}_{n \geq 1}
		\]
		is not normal around $\lambda_0$. Montel's Theorem now guarantees that 
		in any neighborhood $U$ of $\lambda_0$ there is a $\lambda \in U \cap \Omega$ and an 
		$N \in \mathbb{Z}_{\geq 3}$ such that $(g_{\lambda}^N \circ h_\lambda)(0) \in~\{0, \infty,-1\}$
		If $(g_{\lambda}^N \circ h_\lambda)(0) = -1$ we can directly apply Lemma~\ref{lem: g(0) = -1}
		to guarantee the existence of a tree $T \in \mathcal{G}_\Delta$ with $Z_T(\lambda) = 0$. 
		Otherwise, we remark that, since we have chosen $N \geq 3$, we can write 
		$g_{\lambda}^N \circ h_\lambda = f_{\lambda,\tilde{d}_2} \circ f_{\lambda, \tilde{d}_1} \circ 
		\tilde{g}_\lambda$ for some $\tilde{g} \in H_\Delta$ and $\tilde{d}_1,\tilde{d}_2 \in \mathbb{Z}_{\geq 1}$. 
		We find that
		$(f_{\lambda,\tilde{d}_2} \circ f_{\lambda, \tilde{d}_1} \circ \tilde{g}_\lambda)(0) = \infty$ implies 
		$(f_{\lambda, \tilde{d}_1} \circ \tilde{g}_\lambda)(0) = -1$ and $(f_{\lambda,\tilde{d}_2} \circ 
		f_{\lambda, \tilde{d}_1} 
		\circ \tilde{g}_\lambda)(0) = 0$ implies $\tilde{g}_\lambda(0) = -1$. In these cases we apply 
		Lemma~\ref{lem: g(0) = -1} to the respective maps to obtain the result.
	\end{proof}
	\begin{figure}[H]
  		\centering
    	\includegraphics[width=0.70\textwidth]{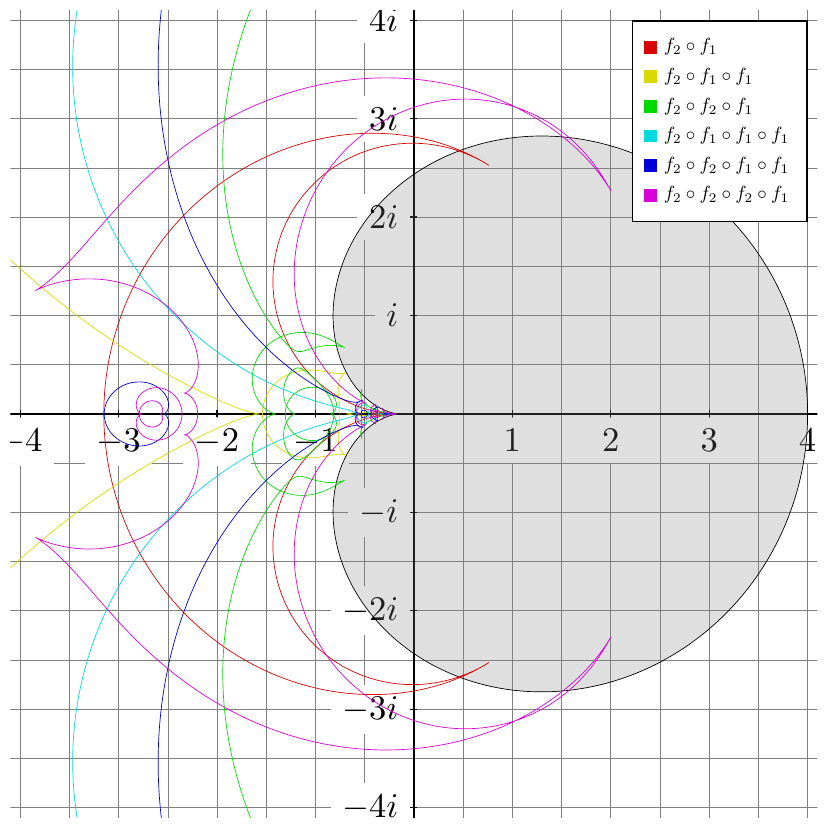}
  		\caption{The shaded area is $U_3$. Those parameters $\lambda$ for 
  			which $g_\lambda$ has an indifferent fixed point for different $g \in H_3$ are 
  			colored according to the legend.}
  		\label{fig: Indifferent Parameters}
	\end{figure}
	The remainder of the proof of Theorem~\ref{thm: Main Theorem} now consists of providing
	explicit examples of nontrivial $g \in H_\Delta$ with a parameter $\lambda \in U_\Delta$ such 
	that $g_\lambda$ has an indifferent fixed point for each $\Delta \in \{ 3, \dots, 9\}$. For 
	$\Delta = 3$ the degree is sufficiently small such that we can accurately calculate all such
	parameters for low degree $g \in H_3$, see Figure \ref{fig: Indifferent Parameters}. It is 
	immediately clear that there are many parameters that lie inside $U_3$.

	For larger $\Delta$ it quickly becomes intractable to calculate images like in 
	Figure~\ref{fig: Indifferent Parameters}, but it remains possible, given some 
	$g \in H_\Delta$, to accurately calculate those parameters $\lambda$ for which
	$g_\lambda$ has a parabolic fixed point of some given multiplier. Numerical 
	approximations for such parameters inside $U_\Delta$ where the multiplier 
	is $1$ are given in Table~\ref{tab: Numerical Results}. These results 
	prove Theorem~\ref{thm: Main Theorem}.
	\begin{table}[H]
		\caption{Given for each $\Delta \in \{3,\dots,9\}$ is a $g \in H_\Delta$
			together with an approximation of a $\lambda \in U_\Delta$ such that $g_\lambda$
			has a fixed point $z$ with $g_\lambda'(z) = 1$. An approximation is given for 
			the absolute value of $\alpha$, where $\alpha$ is a solution to 
			$\lambda = \frac{-\alpha \cdot {(\Delta-1)}^{\Delta - 1}}
			{\left(\Delta - 1+ \alpha\right)^{\Delta}}$
			of least absolute value. This value being less than $1$ confirms that 
			$\lambda \in U_\Delta$, see (\ref{eq: Ud}).}
		\begin{tabular}{cccc}
			\hline
			$\Delta$ & $g$  & $\lambda$  & $|\alpha|$ \\
			\hline
		    $3$ 	& $f_2 \circ f_1$ 			& $0.7624680 	+ 2.5253695\, i$  & $0.97581$ \\
		    $4$		& $f_3 \circ f_1$ 			& $0.37725715 	+ 1.21796118\, i$ & $0.99987$ \\
		    $5$   	& $f_4 \circ f_4 \circ f_1$	& $-0.24803954 	+ 0.17613988\, i$ & $0.98607$ \\
		    $6$		& $f_5 \circ f_5 \circ f_1$	& $-0.19657017 	+ 0.14664968\, i$ & $0.99630$ \\
		    $7$		& $f_6 \circ f_6 \circ f_2$	& $-0.15604600 	+ 0.14898604\, i$ & $0.97830$ \\
		    $8$		& $f_7 \circ f_7 \circ f_2$	& $-0.13276176 	+ 0.12728769\, i$ & $0.98408$ \\
		    $9$		& $f_8 \circ f_8 \circ f_2$	& $-0.11587455 	+ 0.11090067\, i$ & $0.98967$ \\
    	\hline
 		\end{tabular}
 		\label{tab: Numerical Results}
	\end{table}
\end{section}
\begin{section}{Concluding remarks}
	\label{sec: Concluding Remarks}
	It follows from Lemma~\ref{lem: roots near indifferent point} that the set of roots of $Z_G$ for 
	all $G \in \mathcal{G}_\Delta$ accumulates at the boundary of $V_\Delta$, where
	\[
		V_\Delta = \left\{\lambda: \text{$g_\lambda$ has exactly 1 attracting fixed point for all $g \in H_\Delta$
		with $g \neq f_1^N$}\right\}.
	\]
	Recall that we defined $D_\Delta$ to be the largest domain containing $0$ that is zero-free for 
	all $G \in \mathcal{G}_\Delta$. In Section~\ref{sec: indifferent fixed points} we showed that 
	parameters $\lambda$ with $|\lambda| < (\Delta-1)^{\Delta-1}/\Delta^{\Delta}$ lie in $V_\Delta$. In
	\cite{ScottSokal} it is shown that these $\lambda$ also lie in $D_\Delta$. It follows that 
	$D_\Delta \subseteq V_\Delta$. By definition, we have $V_\Delta \subseteq U_\Delta$ and thus we 
	can write
	\[
		D_\Delta \subseteq V_\Delta \subseteq U_\Delta,
	\]
	where the last inclusion was shown to be strict for $3 \leq \Delta \leq 9$ in this paper.
	Two obvious questions that remain open are whether $V_\Delta \neq U_\Delta$ for $\Delta \geq 10$
	and whether $D_\Delta = V_\Delta$ for any $\Delta$. 

	Another question concerns the computational 
	complexity of approximating the independence polynomial. Recall that for $\lambda \in D_\Delta$ there is an
	polynomial time algorithm to approximate $Z_G(\lambda)$ for $G \in \mathcal{G}_\Delta$ (see 
	\cite{PatelRegts2017}). On the other hand, for non-real $\lambda$ outside 
	$U_\Delta$ it was shown by Bez\'{a}kov\'{a}, Galanis, Goldberg and \v{S}tefankovi\v{c} \cite{GoldbergEtAl2018} 
	that approximating $Z_G(\lambda)$ is \#P-hard. The computational complexity 
	of approximating $Z_{G}(\lambda)$ for $\lambda \in U_\Delta - V_\Delta$
	remains to be studied. Given the similar definitions of the region $U_\Delta$ and $V_\Delta$, one 
	might expect that approximating $Z_G(\lambda)$ for non-real $\lambda$ outside $V_\Delta$ is also
	\#P-hard.
\end{section}

\begin{appendix}
\begin{section}{Proof of Lemma \ref{lem: persistently indifferent}}
	The proof that we present here is algebraic rather than analytic in nature. We view $\mathbb{C}[\lambda,z]$
	as a subring of the ring $\mathbb{C}(\lambda)[z]$. This ring is Euclidian, so in particular it is a unique 
	factorization domain. Therefore we can state the following simple lemma.
	\begin{lemma}
		\label{lem: Common Roots}
		Let $p,q \in \mathbb{C}[\lambda,z]$ be coprime in $\mathbb{C}(\lambda)[z]$. Then there are only 
		finitely many $\lambda \in \mathbb{C}$ such that the polynomials $p(\lambda,z)$ and $q(\lambda,z)$
		viewed as elements of $\mathbb{C}[z]$ have common roots.
	\end{lemma}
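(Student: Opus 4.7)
The plan is to exploit that $\mathbb{C}(\lambda)[z]$ is a Euclidean domain (hence a PID), combined with a clearing-of-denominators argument to return to $\mathbb{C}[\lambda,z]$. This turns coprimality of $p$ and $q$ into a Bezout-type identity that constrains the bad $\lambda$ to a finite set.

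First, since $p$ and $q$ are coprime in $\mathbb{C}(\lambda)[z]$, Bezout's identity in this PID provides elements $\alpha,\beta \in \mathbb{C}(\lambda)[z]$ with
\[
	\alpha(\lambda,z)\, p(\lambda,z) + \beta(\lambda,z)\, q(\lambda,z) = 1.
\]
I would then clear denominators: the coefficients of $\alpha$ and $\beta$ (as polynomials in $z$) are rational functions in $\lambda$, so multiplying through by a common denominator $D(\lambda) \in \mathbb{C}[\lambda]\setminus\{0\}$ of all these coefficients yields $A,B \in \mathbb{C}[\lambda,z]$ satisfying
\[
	A(\lambda,z)\, p(\lambda,z) + B(\lambda,z)\, q(\lambda,z) = D(\lambda)
\]
as an identity in $\mathbb{C}[\lambda,z]$.

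Finally, suppose $\lambda_0 \in \mathbb{C}$ is such that $p(\lambda_0,z)$ and $q(\lambda_0,z)$ share a common root $z_0 \in \mathbb{C}$. Substituting $(\lambda_0,z_0)$ into the displayed identity forces $D(\lambda_0) = 0$. Since $D$ is a nonzero univariate polynomial, it has only finitely many roots, and so only finitely many such $\lambda_0$ exist.

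There is essentially no obstacle here; the only small point to verify is that a single nonzero $D(\lambda)$ can be chosen to clear the denominators of all coefficients of $\alpha$ and $\beta$ at once, which is immediate because each of these polynomials has only finitely many nonzero coefficients and each such coefficient is a ratio of polynomials with nonzero denominator. One could equivalently package the whole argument via the resultant $\Res_z(p,q) \in \mathbb{C}[\lambda]$, which is nonzero by coprimality and vanishes at every $\lambda_0$ admitting a common root (modulo the harmless issue of leading coefficients dropping in degree), but the Bezout route avoids any such bookkeeping.
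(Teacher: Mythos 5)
Your argument is correct and is essentially identical to the paper's own proof: both use Bezout's identity in the Euclidean domain $\mathbb{C}(\lambda)[z]$, clear denominators to obtain an identity $A\,p + B\,q = D(\lambda)$ in $\mathbb{C}[\lambda,z]$, and conclude that any parameter admitting a common root must be a zero of the nonzero polynomial $D$. No changes needed.
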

	\begin{proof}
		Since $p,q$ are coprime in the Euclidian domain $\mathbb{C}(\lambda)[z]$, there exist elements
		$u,v \in \mathbb{C}(\lambda)[z]$ such that $u \cdot p + v \cdot q = 1$. There exists an element 
		$w \in \mathbb{C}[\lambda]$ such that the coefficients of $w \cdot u$ and $w \cdot v$ are elements
		of $\mathbb{C}[\lambda]$. It follows that for all $\lambda,z$ we have can write down the following 
		equality of polynomials
		\[
			w(\lambda) u(\lambda,z) \cdot p(\lambda,z) + w(\lambda) v(\lambda,z) \cdot q(\lambda,z)
				=
			w(\lambda).
		\]
		We deduce now that if there is some pair $(\lambda_0,z_0)$ that is both a root of $p$ and $q$, then 
		$\lambda_0$ is a root of $w$. Since $w$ has only finitely many roots, we deduce that only finitely
		many such $\lambda$ can exist. 
	\end{proof}
	Before we prove Lemma~\ref{lem: persistently indifferent}, we recall some properties of the algebraic
	construction called the \emph{resultant}. Namely, if $k$ is a field and $f,g \in k[x]$, then the resultant
	of $f$ and $g$, denoted
	by $\Res_x(f,g)$, is an integer polynomial in the coefficients of $f$ and $g$ with the property that
	$\Res_x(f,g) = 0$ if and only if $f,g$ have a common factor in $k[x]$.
	One can read about the theory of resultants in many introductory texts on computational algebraic geometry, 
	see e.g. \cite[Chapter 3, \textsection 5]{CoxEtAl2007}. We now present a proof of 
	Lemma~\ref{lem: persistently indifferent}.
	\begin{proof}[Proof of Lemma \ref{lem: persistently indifferent}]
		We can assume that $p,q$ are coprime in $\mathbb{C}(\lambda)[z]$. Let $U$ be a neighborhood
		of $\lambda_0$ together with a map $w: U \to \CC$ that has the properties described in 
		Definition~\ref{def: Pers. Indiff.}. We can assume that the map $w$ avoids $\infty$. 
		The holomorphic map 
		\[
			\lambda \mapsto f_\lambda'(w(\lambda))
		\]
		is an open map with constant absolute value and is thus constant on $U$, say equal to $\alpha$
		with $|\alpha| = 1$. Note that we can write 
		\[
			 f_\lambda'(z) = \frac{s(\lambda,z)}{t(\lambda,z)},
		\]
		with $s,t \in \mathbb{C}[\lambda,z]$ coprime in $\mathbb{C}(\lambda)[z]$. Define the following 
		polynomials 
		\[
			l(\lambda,z) = p(\lambda,z) - z \cdot q(\lambda,z)
				\quad \text{ and } \quad
			m(\lambda,z) = s(\lambda,z) - \alpha \cdot t(\lambda,z).	
		\]
		It follows from Lemma~\ref{lem: Common Roots} that for all but finitely many $\lambda$
		we have that $f_\lambda(z) = z$ if and only if $l(\lambda,z) = 0$ and similarly for all but finitely
		many $\lambda$ we have $f_\lambda'(z) = \alpha$ if and only if $m(\lambda,z) = 0$. Consider
		the polynomial
		\[
			R(\lambda) = \Res_z(l,m).
		\]
		Note that $R(\lambda) \in \mathbb{C}[\lambda]$. Since for all but finitely many $\lambda \in U$
		the polynomials $m(\lambda,z)$ and $l(\lambda,z)$ have a common root, namely $w(\lambda)$,
		we find that $R(\lambda)$ has infinitely many roots and is constantly $0$ as a result. This means 
		that for all $\lambda \in \mathbb{C}$ the polynomials $m(\lambda,z)$ and $l(\lambda,z)$ have 
		a common root. This again means that for all but finitely many $\lambda$ there is some $z \in \mathbb{C}$
		such that $f_\lambda(z) = z$ and $f'_\lambda(z) = \alpha$, where we now 
		consider $f_\lambda$ to be defined for every complex parameter $\lambda$. This concludes the proof.
	\end{proof}

\end{section}

\end{appendix}

\bibliography{mainbib}{}

\providecommand{\bysame}{\leavevmode\hbox to3em{\hrulefill}\thinspace}
\providecommand{\MR}{\relax\ifhmode\unskip\space\fi MR }
\providecommand{\MRhref}[2]{%
  \href{http://www.ams.org/mathscinet-getitem?mr=#1}{#2}
}
\providecommand{\href}[2]{#2}
\begin{thebibliography}{BGGv18}

\bibitem[Bar16]{BarvinokBook2016}
Alexander Barvinok, \emph{Combinatorics and complexity of partition functions},
  Algorithms and Combinatorics, vol.~30, Springer, Cham, 2016. \MR{3558532}

\bibitem[BC18]{BencsCsikvari2018}
Ferenc {Bencs} and P{\'e}ter {Csikv{\'a}ri}, \emph{{Note on the zero-free
  region of the hard-core model}}, arXiv e-prints (2018), arXiv:1807.08963.

\bibitem[BGGv18]{GoldbergEtAl2018}
Ivona Bez\'{a}kov\'{a}, Andreas Galanis, Leslie~Ann Goldberg, and Daniel
  \v{S}tefankovi\v{c}, \emph{Inapproximability of the independent set
  polynomial in the complex plane}, S{TOC}'18---{P}roceedings of the 50th
  {A}nnual {ACM} {SIGACT} {S}ymposium on {T}heory of {C}omputing, ACM, New
  York, 2018, pp.~1234--1240. \MR{3826331}

\bibitem[CLO07]{CoxEtAl2007}
David Cox, John Little, and Donal O'Shea, \emph{Ideals, varieties, and
  algorithms}, third ed., Undergraduate Texts in Mathematics, Springer, New
  York, 2007, An introduction to computational algebraic geometry and
  commutative algebra. \MR{2290010}

\bibitem[LY52a]{LeeYangPhase1}
T.~D. Lee and C.~N. Yang, \emph{Statistical theory of equations of state and
  phase transitions. {I}. {T}heory of condensation}, Physical Rev. (2)
  \textbf{87} (1952), 404--409. \MR{0053028}

\bibitem[LY52b]{LeeYangPhase2}
\bysame, \emph{Statistical theory of equations of state and phase transitions.
  {II}. {L}attice gas and {I}sing model}, Physical Rev. (2) \textbf{87} (1952),
  410--419. \MR{0053029}

\bibitem[McM94]{McMullen1994}
Curtis~T. McMullen, \emph{Complex dynamics and renormalization}, Annals of
  Mathematics Studies, vol. 135, Princeton University Press, Princeton, NJ,
  1994. \MR{1312365}

\bibitem[MnSS83]{ManeSadSullivan1983}
R.~Ma\~{n}\'{e}, P.~Sad, and D.~Sullivan, \emph{On the dynamics of rational
  maps}, Ann. Sci. \'{E}cole Norm. Sup. (4) \textbf{16} (1983), no.~2,
  193--217. \MR{732343}

\bibitem[PR17]{PatelRegts2017}
Viresh Patel and Guus Regts, \emph{Deterministic polynomial-time approximation
  algorithms for partition functions and graph polynomials}, SIAM J. Comput.
  \textbf{46} (2017), no.~6, 1893--1919. \MR{3738853}

\bibitem[PR19]{PetersRegts2017}
Han Peters and Guus Regts, \emph{On a conjecture of sokal concerning roots of
  the independence polynomial}, Michigan Math. J. (2019), Advance publication.

\bibitem[She85]{Shearer1985}
J.~B. Shearer, \emph{On a problem of {S}pencer}, Combinatorica \textbf{5}
  (1985), no.~3, 241--245. \MR{837067}

\bibitem[Sok01]{Sokal2001}
A.~D. Sokal, \emph{A personal list of unsolved problems concerning lattice
  gases and antiferromagnetic {P}otts models}, Markov Process. Related Fields
  \textbf{7} (2001), no.~1, 21--38, Inhomogeneous random systems
  (Cergy-Pontoise, 2000). \MR{1835744}

\bibitem[SS05]{ScottSokal}
Alexander~D. Scott and Alan~D. Sokal, \emph{The repulsive lattice gas, the
  independent-set polynomial, and the {L}ov\'{a}sz local lemma}, J. Stat. Phys.
  \textbf{118} (2005), no.~5-6, 1151--1261. \MR{2130890}

\bibitem[Wei06]{Weitz2006}
Dror Weitz, \emph{Counting independent sets up to the tree threshold},
  S{TOC}'06: {P}roceedings of the 38th {A}nnual {ACM} {S}ymposium on {T}heory
  of {C}omputing, ACM, New York, 2006, pp.~140--149. \MR{2277139}

\end{thebibliography}
\bibliographystyle{amsalpha}

\end{document}